\newcommand\normal{{\cal N}}
\newcommand\matA{{\bf A}}
\newcommand\matH{{\bf H}}
\newcommand\matI{{\bf I}}
\newcommand\matQ{{\bf \Sigma}}
\newcommand\Diag{{\bf Diag}}
\newcommand\vecD{{\bf D}}
\newcommand\bfzero{{\bf 0}}
\newcommand\vecX{{\bf X}}
\newcommand\vecY{{\bf Y}}
\newcommand\vecv{{\bf v}}
\newcommand\vecy{{\bf y}}
\newcommand\Polya{P\'olya}
\newcommand\convD{{\buildrel {\cal D} \over \longrightarrow}}
\newcommand\Var{{\mathbb V{\rm ar}}}
\newcommand\Cov{{\mathbb C{\rm ov}}}
\newtheorem{theorem}{Theorem}[section]
\newtheorem{corollary}{Corollary}[section]
\newtheorem{example}{Example}[section]
\newtheorem{remark}{Remark}[section]
\newcommand\almostsure{\buildrel a.s. \over \longrightarrow}
\begin{document}
\title{Degrees in random $m$-ary hooking networks}
\date{}

\author{Kiran R. Bhutani}
\email{bhutani@cua.edu}
\orcid{0000-0003-4823-0887}
\affiliation{Department of Mathematics, The Catholic University of America, Washington, D.C. 20064, U.S.A.}
\author{Ravi Kalpathy}
\email{kalpathy@cua.edu}
\orcid{0000-0001-5874-2427}
\affiliation{Department of Mathematics, The Catholic University of America, Washington, D.C. 20064, U.S.A.}
\author{Hosam Mahmoud}
\email{hosam@gwu.edu}
\orcid{0000-0003-0962-9406}
\affiliation{Department of Statistics, The George Washington University, Washington, D.C. 20052, U.S.A.}
\maketitle
\begin{abstract}
The theme in this paper is a composition of random graphs and \Polya\ urns. The random graphs are generated through a small structure called the seed.
Via \Polya\ urns,
we study the asymptotic 
degree structure in a random $m$-ary 
hooking network and identify strong laws.
We further upgrade the result to second-order asymptotics in the form of 
multivariate Gaussian 
limit laws. 
We give a few concrete examples 
and explore
some properties
with a full representation of the Gaussian limit in each case.
The asymptotic covariance matrix associated with the \Polya\ urn 
is obtained by a new method that originated in this paper and is reported in~\cite{Mahmoud2022}.
\end{abstract}
\section{Introduction}
Many random structures grow by adding small components. For example,
networks grow by linking new small structures, and urns grow by
adding balls, etc. In this manuscript, we deal with a certain flavor of network growth via ``hooking'' components of fixed structure (all additions are graphs that have the same
form). The basic repetitive form is called a {\em seed}, which has 
a particular node in it called the {\em hook}. 

Initially, the network is just the seed.
At every discrete point in time,
a copy of the seed is brought to the current network and adjoined to it by fusing
its hook into a node (vertex) chosen from the network. 
After a long period of growth, we have a large network. Figure~\ref{Fig:triangle}
shows the step-by-step growth of a hooking network grown from a triangular seed in four steps.
We have not specified how a node is chosen in the network. It could be specified deterministically by an external agent, such as an AI optimizer, or
it could be done according to a probability distribution on the nodes of
the existing network. In the latter case, the growth of the network is a stochastic process. In this light, Figure~\ref{Fig:triangle} would be
only one realization of the process.

\begin{figure}[thb]
\begin{center}
\begin{tikzpicture}[scale=0.4]
\node[draw=white] at (1, 2.65) {hook};
\coordinate (A) at (0,0);
\coordinate (B) at (2,0);
\coordinate (C) at (1,1.732);

\draw [thick] (A)--(B)--(C)--cycle;

\draw [thick, fill=black] (0,0) circle [radius=0.2];
\draw [thick, fill=black] (2,0) circle [radius=0.2];
\draw [thick, fill=black] (1,1.732) circle [radius=0.2];

\coordinate (A) at (4,0);
\coordinate (B) at (6,0);
\coordinate (C) at (5,1.732);
\coordinate (D) at (8,0);
\coordinate (E) at (7,1.732);

\draw [thick] (A)--(B)--(D)--(E)--(B)--(C)--cycle;

\draw [thick, fill=black] (6,0) circle [radius=0.2];
\draw [thick, fill=black] (8,0) circle [radius=0.2];
\draw [thick, fill=black] (7,1.732) circle [radius=0.2];

\draw [thick, fill=black] (4,0) circle [radius=0.2];
\draw [thick, fill=black] (6,0) circle [radius=0.2];
\draw [thick, fill=black] (5,1.732) circle [radius=0.2];

\coordinate (A) at (10,0);
\coordinate (B) at (12,0);
\coordinate (C) at (11,1.732);
\coordinate (D) at (10,3.464);
\coordinate (E) at (12,3.464);

\draw [thick] (A)--(B)--(C)--(D)--(E)--(C)--cycle;

\draw [thick, fill=black] (10,0) circle [radius=0.2];
\draw [thick, fill=black] (12,0) circle [radius=0.2];
\draw [thick, fill=black] (11,1.732) circle [radius=0.2];
\draw [thick, fill=black] (10,3.464) circle [radius=0.2];
\draw [thick, fill=black] (12,3.464) circle [radius=0.2];

\coordinate (A) at (12,0);
\coordinate (B) at (14,0);
\coordinate (C) at (13,1.732);

\draw [thick] (A)--(B)--(C)--cycle;

\draw [thick, fill=black] (12,0) circle [radius=0.2];
\draw [thick, fill=black] (14,0) circle [radius=0.2];
\draw [thick, fill=black] (13,1.732) circle [radius=0.2];


\coordinate (A) at (16,0);
\coordinate (B) at (18,0);
\coordinate (C) at (17,1.732);
\coordinate (D) at (16,3.464);
\coordinate (E) at (18,3.464);
\coordinate (F) at (20,0);
\coordinate (G) at (19,1.732);
\coordinate (H) at (21,1.732);
\coordinate (I) at (22,0);

\draw [thick] (A)--(B)--(C)--cycle;
\draw [thick] (C)--(E)--(D)--cycle;
\draw [thick] (F)--(B)--(G)--cycle;
\draw [thick] (F)--(H)--(I)--cycle;

\draw [thick, fill=black] (16,0) circle [radius=0.2];
\draw [thick, fill=black] (18,0) circle [radius=0.2];
\draw [thick, fill=black] (17,1.732) circle [radius=0.2];
\draw [thick, fill=black] (16,3.464) circle [radius=0.2];
\draw [thick, fill=black] (18,3.464) circle [radius=0.2];
\draw [thick, fill=black] (19,1.732) circle [radius=0.2];
\draw [thick, fill=black] (20,0) circle [radius=0.2];
\draw [thick, fill=black] (21,1.732) circle [radius=0.2];
\draw [thick, fill=black] (22,0) circle [radius=0.2];


\coordinate (A) at (24,0);
\coordinate (B) at (26,0);
\coordinate (C) at (25,1.732);
\coordinate (D) at (24,3.464);
\coordinate (E) at (26,3.464);
\coordinate (F) at (28,0);
\coordinate (G) at (27,1.732);
\coordinate (H) at (29,1.732);
\coordinate (I) at (30,0);
\coordinate (J) at (25,-1.732);
\coordinate (K) at (27,-1.732);

\draw [thick] (A)--(B)--(C)--cycle;
\draw [thick] (C)--(E)--(D)--cycle;
\draw [thick] (F)--(B)--(G)--cycle;
\draw [thick] (F)--(H)--(I)--cycle;
\draw [thick] (J)--(K)--(B)--cycle;

\draw [thick, fill=black] (24,0) circle [radius=0.2];
\draw [thick, fill=black] (26,0) circle [radius=0.2];
\draw [thick, fill=black] (25,1.732) circle [radius=0.2];
\draw [thick, fill=black] (24,3.464) circle [radius=0.2];
\draw [thick, fill=black] (26,3.464) circle [radius=0.2];
\draw [thick, fill=black] (28,0) circle [radius=0.2];
\draw [thick, fill=black] (27,1.732) circle [radius=0.2];
\draw [thick, fill=black] (29,1.732) circle [radius=0.2];
\draw [thick, fill=black] (30,0) circle [radius=0.2];
\draw [thick, fill=black] (25,-1.732) circle [radius=0.2];
\draw [thick, fill=black] (27,-1.732) circle [radius=0.2];

\end{tikzpicture}
\end{center}
  \caption{A hooking network grown in four time steps from a triangle (seed).}
  \label{Fig:triangle}
\end{figure}
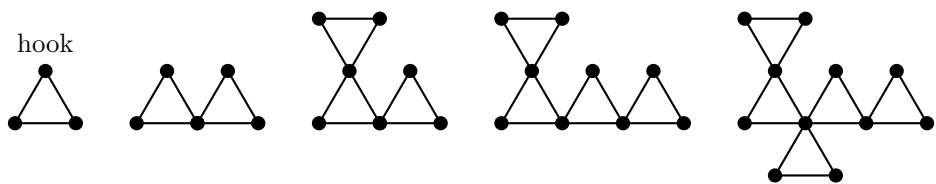

Alternative nomenclature appears in the jargon. For instance, 
adjoining the seed to the network is called by some hooking.
The word ``fusing'' also appears frequently. 
The chosen receptacle node in the graph to host the hook
of the newly added seed is referred to as a ``latch.'' The hooking (fusing)
operation is achieved by identifying the hook and the chosen latch. The act of adjoining
a seed at a step may then be considered as ``latching'' or ``recruiting''
by the latch. 
  
There has been recently a flurry of research papers on hooking networks (though some do not call it that)~\cite{Bahrani,Bhutani1,Bhutani2,Bhutani3,Bhutani4,
ChenChen,Holmgren,Colin1,Colin2,Gittenberger,Mohan,profile,SP,
Samorodnitsky}; see~\cite{VanderHofstad} for a comprehensive background.  

Applications can be found in organic chemistry, social networks and many other areas. For instance, in polymer chemistry, 
monomers link together into a covalently bonded network of macromolecules to form polymers~\cite{Bhutani1,Bhutani2,Bhutani4}. 
Social networks and the like grow by adding communities in which one member of a new community is
a friend of one member of an existing social network, where the degree of a node in the network is a reflection of the popularity of the 
network subscriber it represents.

With the degrees of nodes in a network being of prime importance, our 
purpose in this paper is to study the degrees in a certain kind of network (with
a limit on the number of hookings at every node).  As the degrees in the network and the balls in a certain kind of \Polya\ urns have the same generative mechanism,
we rely
on the urn composition to produce theorems for the network structure 
considered in this manuscript.

We note that the study of the
asymptotic degree structure in hooking networks has already been conducted  (under different assumptions) in  \cite{ChenChen,Holmgren,Colin1, profile}.
The authors of these papers did not impose a limit on the number
of hookings at a particular place.

Some applications put a cap on
the number of times a component can be added at one place. A popular example is
the binary search tree, which
arises in computer science as an efficient data structure that expedites
the fast retrieval of data~\cite{Brown,Knuth}. The binary imposition comes from
the physical implementation of the nodes in computer memory. A node is comprised 
of space for fields of information with distinguishable pointers to the left and right subtrees containing nodes of a similar structure. These pointers occupy different locations in the computer memory.

A few models of
hooking networks~\cite{Bhutani1,Holmgren,Colin1,Colin2} have been introduced for applications with no limit on the number
of hookings that can be made at a particular node or edge. In this paper,
we turn our attention to $m$-ary hooking networks, where at most $m$ (finite)
copies of a seed can join a node in the network. 
Figures~\ref{Fig:virtual}--\ref{Fig:extended}
will aid the reader visualize this construct.

Via \Polya\ urns, 
we study the asymptotic 
degree structure in these $m$-ary networks. 
We  identify strong laws
for the number of nodes of various degrees in the network. 
We further upgrade the result to second-order asymptotics in the form of 
multivariate Gaussian limit laws. 
We compute the covariances of
the \Polya\ urn by a new method that originated in this paper and is reported in~\cite{Mahmoud2022}. 
\subsection{The $m$-ary hooking network}
An $m$-ary network grows as follows: 
We start with a connected {\em seed} 
graph~$G_0$ 
with vertex set of size $\tau_0$.  
A vertex in the 
seed is designated as a {\em hook}.
Each copy of the seed that joins the network subsequently 
uses its own copy of that vertex for hooking. 
 At step $n$, a copy of the seed is hooked into the graph
 $G_{n-1} = (V_{n-1}, {\cal E}_{n-1})$ that exists at time $n-1$ to produce the graph $G_n = (V_n, E_n)$. So,
$\tau_0$ is the size of $V_0$, and
 $\tau_n = (\tau_0 -1) n +\tau_0$. 
For instantiation, 
see Figure~\ref{Fig:triangle} in which the seed is a triangle, and
$\tau_0 = 3$, $\tau_1= 5, \tau_2=7, \tau_3=9$, and $\tau_4 =11$.

We think of the graph $G_n$ as a network of {\em age} $n$.  
The hooking
is accomplished by fusing together the hook of a new copy of the seed and a
{\em latch} 
(vertex) randomly chosen from the network at age~$n-1$. A vertex in the network can qualify as a latch
at most $m\ge 1$ times. 

We consider a {\em uniform} probability model that equally likely selects any of the insertion
positions in the network as a latch.
Initially, there are $m$ insertion positions associated with each vertex in~$G_0$.
Every hooking takes away an insertion position from a vertex. 
After a node recruits $r \le m$ times, there remain $m-r$ insertion positions.
When all
$m$ positions are taken at a vertex (i.e.,  the vertex has recruited $m$ times),
the vertex is saturated and is no longer active in recruiting. 
 
A visual device helps us discern the insertion positions. We represent  
each insertion position at a node as a virtual node connected by an edge to the vertex. The graph carrying the virtual nodes is called the {\em extended network}. 
Initially, there are $m$ virtual nodes attached to each vertex of the seed,
which is the graph $G_0$. Figure~\ref{Fig:virtual} shows a seed to be used in building a binary network ($m=2$) and its extension. 
The nodes of the seed are shown as bullets and the virtual nodes in the extended
seed are shown as squares. 
The colors in the square nodes encode the degree  and history of the nodes
they are attached
to. Initially every node has two square nodes (insertion positions) attached to it and colors 
representing their degrees at the start. For instance, initially each node of degree~3
carries two white  (color 1) square nodes, and each node of degree~7 carries
two blue (color 2) square nodes. These colors will change over time according to a scheme
that is explained later.
    
\begin{figure}[thb]
\begin{center}
\begin{tikzpicture}[scale=0.5]
\node[draw=white] at (-12, -2.55) {hook};
\node[draw=white] at (-12.55, 2.55) {$x$};
\node[draw=white] at (-7.45, 2.55) {$y$};
\node[draw=white] at (-7.45, -2.55) {$z$};
\coordinate (A) at (-12,-2);
\coordinate (B) at (-12,2);
\coordinate (C) at (-8,2);
\coordinate (D) at (-8,-2);
\draw [thick] (A)--(B)--(C)--(D)--cycle;
\draw [thick] (B)--(D);
\draw [thick] (A)--(C);

\draw [thick, fill=black] (-12,-2) circle [radius=0.2];
\draw [thick, fill=black] (-12,2) circle [radius=0.2];
\draw [thick, fill=black] (-8,2) circle [radius=0.2];
\draw [thick, fill=black] (-8,-2) circle [radius=0.2];
\draw [thick] (-7,2) ellipse (1 and 0.3);
\draw [thick] (-8,3) ellipse (0.3 and 1);

\node[draw=white] at (-1, -2.55) {hook};
\coordinate (A) at (-2,-2);
\coordinate (B) at (-2,2);
\coordinate (C) at (2,2);
\coordinate (D) at (2,-2);
\draw [thick] (A)--(B)--(C)--(D)--cycle;
\draw [thick] (A)--(C);
\draw [thick] (B)--(D);

\draw [thick, fill=black] (-2,-2) circle [radius=0.2];
\draw [thick, fill=black] (-2,2) circle [radius=0.2];
\draw [thick, fill=black] (2,2) circle [radius=0.2];
\draw [thick, fill=black] (2,-2) circle [radius=0.2];
\draw [thick] (3,2) ellipse (1 and 0.3);
\draw [thick] (2,3) ellipse (0.3 and 1);

\coordinate (E) at (-4,2);
\coordinate (F) at (4,-2);
\coordinate (G) at (-4,-2);
\coordinate (H) at (3.5,0.1);
\coordinate (I) at (-2,4);
\coordinate (J) at (-2,-4);
\coordinate (K) at (2,-4);
\coordinate (L) at (0.5,3.9);
\draw (B)--(E);
\draw (D)--(F);
\draw (A)--(G);
\draw (C)--(H);
\draw (A)--(J);
\draw (B)--(I);
\draw (C)--(L);
\draw (D)--(K);
\node at (-4,2) [rectangle,draw, fill=white] {$1$};
\node at (-2,-4) [rectangle,draw, fill=white] {$1$};
\node at (-2,4) [rectangle,draw, fill=white] {$1$};
\node at (4,-2) [rectangle,draw, fill=white] {$1$};
\node at (2,-4) [rectangle,draw, fill=white] {$1$};
\node at (-4,-2) [rectangle,draw, fill=white] {$1$};
\node at (3.85,0.15) [rectangle,draw, fill=cyan] {$2$};
\node at (0.5,4.3) [rectangle,draw, fill=cyan] {$2$};
\end{tikzpicture}
\end{center}
  \caption{A seed (left) for a binary network and its extension (right).}
  \label{Fig:virtual}
\end{figure}
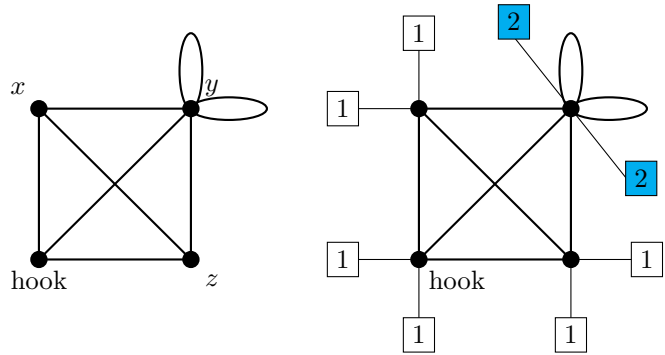

Figure~\ref{Fig:extended} shows a binary network grown 
from the seed in Figure~\ref{Fig:virtual}
in four steps. The four hookings occur at the nodes
labeled $x$ (twice), $y$ (once), and hook (once)  in Figure~\ref{Fig:virtual}. Note in Figure~\ref{Fig:extended} the appearance of additional colors (to be explained later). Note also that node $x$ no longer carries
virtual nodes, as it has used all its recruiting chances, having recruited twice.
There is no change at the node labeled $z$, but now each of the hook and $y$ 
carries only one virtual node, having only recruited once (with one extra 
chance left at recruiting). 

The seed in this particular binary hooking network
has nodes of degree~3 and 7. The hook is of degree~3. 
Hooking the seed into nodes of degree 3 increases the degree to 6, which can ultimately increase to 9;
hooking the seed into nodes
of degree 7 increases the degree to 10, which can ultimately increase to 13. 
The six degrees 3, 7, 6, 9, 10, 13 are the only admissible degrees. 
A coloring
scheme needs six tracking colors, one color for each admissible degree. 
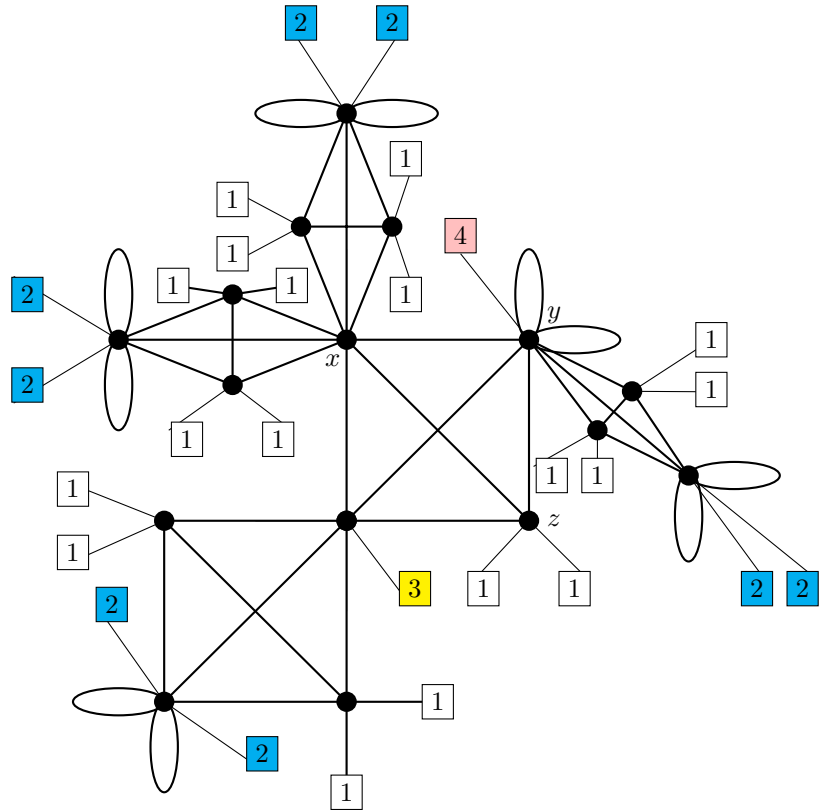
\begin{figure}[thb]
\begin{center}
\begin{tikzpicture}[scale=0.6]

\node[draw=white] at (-2.3, 1.55) {$x$};
\node[draw=white] at (2.55, 2.55) {$y$};
\node[draw=white] at (2.55, -2.0) {$z$};
\coordinate (A) at (-2,-2);
\coordinate (B) at (-2,2);
\coordinate (C) at (2,2);
\coordinate (D) at (2,-2);
\coordinate (E) at (3,-2);
\coordinate (F) at (1,-2);
\draw [thick] (A)--(B)--(C)--(D)--cycle;
\draw [thick] (A)--(C);
\draw [thick] (B)--(D);

\draw [thick, fill=black] (-2,-2) circle [radius=0.2];
\draw [thick, fill=black] (-2,2) circle [radius=0.2];
\draw [thick, fill=black] (2,2) circle [radius=0.2];
\draw [thick, fill=black] (2,-2) circle [radius=0.2];

\draw [thick, fill=black] (5.5,-1) circle [radius=0.2];
\draw [thick, fill=black] (4.26, 0.86) circle [radius=0.2];
\draw [thick, fill=black] (3.5,0) circle [radius=0.2];

\draw [thick] (3,2) ellipse (1 and 0.3);
\draw [thick] (2,3) ellipse (0.3 and 1);

\coordinate (E) at (-4,2);
\coordinate (F) at (3.3,-3.3);
\coordinate (G) at (-4,-2);
\coordinate (H) at (3.5,0.1);
\coordinate (I) at (-2,4);
\coordinate (J) at (-2,-4);
\coordinate (K) at (0.8,-3.3);
\coordinate (L) at (0.5,3.9);
\draw (D)--(F);
\draw (A)--(G);
\draw (A)--(J);
\draw (B)--(I);
\draw (C)--(L);
\draw (D)--(K);
\coordinate (R) at (3.5,0);
\coordinate (S) at (5.5,-1);
\coordinate (T) at (4.26, 0.86) ;
\draw [thick] (C)--(R)--(S)--(T)--cycle;
\draw [thick] (C)--(S);
\draw [thick] (R)--(T);

\coordinate (U) at (3.5, -1);
\coordinate (V) at (2.1, -0.8);
\coordinate (W) at (8.2,-3.2);
\coordinate (X) at (7.1,-3.2);
\coordinate (Y) at (6, 0.85);
\coordinate (Z) at (6, 2);
\draw (R)--(U);
\draw (R)--(V);
\draw (S)--(W);
\draw (S)--(X);
\draw (T)--(Y);
\draw (T)--(Z);

\node at (2.5,-1) [rectangle,draw, fill=white] {$1$};
\node at (3.5,-1) [rectangle,draw, fill=white] {$1$};
\node at (6,0.9) [rectangle,draw, fill=white] {$1$}; 
\node at (6,2) [rectangle,draw, fill=white] {$1$};
\node at (7,-3.5) [rectangle,draw, fill=cyan] {$2$};
\node at (8,-3.5) [rectangle,draw, fill=cyan] {$2$};
\node at (1,-3.5) [rectangle,draw, fill=white] {$1$};
\node at (3,-3.5) [rectangle,draw, fill=white] {$1$};
\node at (0.5,4.3) [rectangle,draw, fill=pink] {$4$};


\coordinate (A) at (-2,-2);
\coordinate (B) at (-6,-2);
\coordinate (C) at (-6,-6);
\coordinate (D) at (-2,-6);
\coordinate (E1) at (-8,-1.2);
\coordinate (E2) at (-8,-2.9);
\coordinate (F) at (-2,-8);
\coordinate (G) at (-2,-2);
\coordinate (H) at (0,-6);
\coordinate (I) at (-7.4,-4);
\coordinate (J) at (-4,-7.4);
\coordinate (K) at (-0.8,-3.6);
\draw [thick] (A)--(B)--(C)--(D)--cycle;
\draw [thick] (A)--(C);
\draw [thick] (B)--(D);
\draw (B)--(E1);
\draw (B)--(E2);
\draw [thick] (D)--(F);
\draw [thick] (D)--(H);
\draw  (C)--(I);
\draw  (C)--(J);
\draw  (A)--(K);
\draw [thick] (-7,-6) ellipse (1 and 0.3);
\draw [thick] (-6,-7) ellipse (0.3 and 1);

\draw [thick, fill=black] (-2,-2) circle [radius=0.2];
\draw [thick, fill=black] (-2,-6) circle [radius=0.2];
\draw [thick, fill=black] (-6,-2) circle [radius=0.2];
\draw [thick, fill=black] (-6,-6) circle [radius=0.2];

\node at (-0.5,-3.5) [rectangle,draw, fill=yellow] {$3$};
\node at (-8,-2.7) [rectangle,draw, fill=white] {$1$};
\node at (-8,-1.3) [rectangle,draw, fill=white] {$1$};
\node at (-2,-8) [rectangle,draw, fill=white] {$1$};
\node at (0,-6) [rectangle,draw, fill=white] {$1$};
\node at (-3.85,-7.15) [rectangle,draw, fill=cyan] {$2$};
\node at (-7.15,-3.85) [rectangle,draw, fill=cyan] {$2$};


\coordinate (A) at (-2,2);
\coordinate (B) at (-2,7);
\coordinate (C) at (-1,4.5);
\coordinate (D) at (-3,4.5);
\draw [thick, fill=black] (-2,7) circle [radius=0.2];
\draw [thick, fill=black] (-1,4.5) circle [radius=0.2];
\draw [thick, fill=black] (-3,4.5) circle [radius=0.2];
\coordinate (E) at (-7, 2);
\coordinate (F) at (-4.5, 3);
\coordinate (G) at (-4.5,1);
\coordinate (H) at (-4.3, 5.2);
\coordinate (I) at (-4.3,3.8);
\coordinate (J) at (-0.5,3);
\coordinate (K) at (-0.5,6);
\coordinate (L) at (-0.7,9);
\coordinate (M) at (-3.3,9);
\coordinate (N) at (-9.3,3.4);
\coordinate (O) at (-9.3,0.6);
\coordinate (P) at (-5.9, 0);
\coordinate (Q) at (-3.2,0);
\draw  (G)--(P);
\draw  (G)--(Q);
\draw  (D)--(H);
\draw  (D)--(I);
\draw  (C)--(J);
\draw  (C)--(K);
\draw  (B)--(L);
\draw  (B)--(M);
\draw  (E)--(N);
\draw  (E)--(O);

\draw [thick, fill=black] (-7, 2) circle [radius=0.2];
\draw [thick, fill=black] (-4.5, 1) circle [radius=0.2];
\draw [thick, fill=black] (-4.5, 3) circle [radius=0.2];
\draw [thick] (-3,7) ellipse (1 and 0.3);
\draw [thick] (-1,7) ellipse (1 and 0.3);
\draw [thick] (-7,3) ellipse (0.3 and 1);
\draw [thick] (-7,1) ellipse (0.3 and 1);

\draw [thick] (A)--(B)--(C)--(D)--cycle;
\draw [thick] (A)--(F)--(E)--(G)--cycle;
\draw [thick] (B)--(D);
\draw [thick] (A)--(C);
\draw [thick] (F)--(G);
\draw [thick] (A)--(E);
\node at (-9,3) [rectangle,draw, fill=cyan] {$2$};
\node at (-9,1) [rectangle,draw, fill=cyan] {$2$};

\node at (-3,9) [rectangle,draw, fill=cyan] {$2$};
\node at (-1,9) [rectangle,draw, fill=cyan] {$2$};

\node at (-0.7, 3) [rectangle,draw, fill=white] {$1$};
\node at (-0.7, 6) [rectangle,draw, fill=white] {$1$};

\node at (-4.5, 5.1) [rectangle,draw, fill=white] {$1$};
\node at (-4.5, 3.9) [rectangle,draw, fill=white] {$1$};

\node at (-5.5, -0.2) [rectangle,draw, fill=white] {$1$};
\node at (-3.5, -0.2) [rectangle,draw, fill=white] {$1$};

\draw [thick] (5.5,-1.9) ellipse (0.3 and 1);
\draw [thick] (6.5,-1) ellipse (1 and 0.3);

\coordinate (A) at (-4.5,3);
\coordinate (B) at (-3.2,3.2);
\coordinate (C) at (-5.8,3.2);
\draw [thick] (A)--(B);
\draw [thick] (A)--(C);
\node at (-3.2,3.2) [rectangle,draw, fill=white] {$1$};
\node at (-5.8,3.2) [rectangle,draw, fill=white] {$1$};
\end{tikzpicture}
\end{center}
  \caption{An extended binary network at age 4 (after four hookings). Colors 1, 2, 3, and 4 correspond to active nodes of degrees 3, 7, 6, and 10, respectively.}
  \label{Fig:extended}
\end{figure}
\subsection{Organization}
This section is continued in 
Subsection~\ref{Subsec:notation}, which sets up
the notation used throughout.
The sequel is organized in sections. 
We investigate the distribution of degrees in the network $G_n$, for large~$n$.
It is demonstrated that asymptotically the admissible degrees have joint
multivariate Gaussian laws.
A paramount device for this investigation is the \Polya\ urn~\cite{Mybook}. 
In Section~\ref{Sec:Polya}, we say a word about  \Polya\ urns and mention the
specific results on which our investigation falls back. 
In Section~\ref{Sec:urnunderlying}, we associate a specific \Polya\ urn
with $m$-ary networks.

Applying urn theory requires 
extensive linear
algebra computation for large matrices. We take up most of the computation to find
eigenvalues and eigenvectors in Section~\ref{Sec:urnunderlying}. 
The main results, presented in Section~\ref{Sec:Gaussian},
are strong laws for the degrees, 
and asymptotic Gaussian laws, for any $m\ge 1$.
In Section~\ref{Sec:examples}, we give some specific instances in
four subsections. The examples
illustrate the usual cases and 
properties that appear in
degenerate cases.  The lengthy details for the 
construction of the covariance matrix in the binary example in 
Section~\ref{Sec:examples}
are relegated to an appendix.
\subsection{Notation}
\label{Subsec:notation}
We use the notation $[r]$ to denote the set $\{1,2, \ldots, r\}$.
For a real number~$x$, 
we denote the falling factorial $x(x-1)\ldots (x-r+1)$ by $(x)_r$; the
usual interpretation of~$(x)_0$ is 1.

Suppose the distinct degrees in the seed are $d_1 < d_2 <\cdots <d_k$,
for some $k \ge 1$ , and there are $n_\ell$ vertices of degree 
$d_\ell$ in the seed, for $\ell = 1, \ldots, k$. Throughout, the hook is considered to be of degree
$d_i =:  h$, and we
assume there are $n_j\ge 1$ nodes of degree $d_j$ in the seed.
Hence, the degrees that appear in the $m$-ary network are $d_j+sh$, for $j\in[k]$
and for $s=0, 1, \ldots, m$. Note that the numbers $d_j+sh$ need not be 
distinct. We present an example of this 
in Subsection~\ref{Subsec:unarydegen}.

We print matrices and vectors in boldface, often subscripted in a way to
reveal the dimension or the position in a block matrix. The distinction
is clear from the context.
We use $\bfzero_j$ to denote either a vector of $j$ zeros or a $j\times j$ matrix of zeros. The intent will
be determined by the context.
We use $\matI_j$ for the $j\times j$ identity 
matrix.
For a matrix $\bf C$, we denote the transpose by~${\bf C}^T$; the notation $|\bf C|$ is its determinant. We refer to a diagonal matrix
that has the numbers $b_1, \ldots, b_s$ on the diagonal as $\Diag(b_1, \ldots, b_s)$. For a function~$g(n)$,
the notation ${\bf O}(g(n))$ and ${\bf o}(g(n))$ respectively 
represent a matrix in which all the entries
are $O(g(n))$ and $o(g(n))$ in the usual sense.
\subsection*{Acknowledgements}
The authors are thankful to Professor Alexander Boris Levin for a valuable
discussion on the linear algebra. We also thank two anonymous reviewers
for their insightful comments and helpful suggestions.
The referees' sincere and generous effort improved the exposition.
\section{\Polya\ urns}
\label{Sec:Polya}
A multicolor \Polya\ urn scheme is initially nonempty. 
Suppose up to $c$ colors,  numbered $1, 2, \ldots, c$,
can appear over the course of time.
At each time step, a ball is drawn at
random 
from
the urn and its color is noted. If the color of the ball
withdrawn is $i$, we put it back in the urn and add $a_{i,j}$ balls of 
color $j\in [c]$, and the drawing is continued. These 
dynamics are captured in a $c \times c$ replacement matrix:
$$\matA = \begin{pmatrix}
   a_{1,1} &a_{1,2} &\ldots&a_{1,c}\\
   a_{2,1} &a_{2,2} &\ldots&a_{2,c}\\
   \vdots &\vdots &\ddots&\vdots\\
   a_{c,1} &a_{c,2} &\ldots&a_{c,c}
\end{pmatrix}.$$

In a general setting, the entries of the replacement matrix
can be negative (which means removing balls) or even 
random. All the urns that appear in this paper have deterministic
entries. We restrict all further presentations to fixed replacement matrices. 
Let $X_{n,i}$ be the number of balls of color $i\in [c]$ in the urn after 
$n$ draws, and
let $\vecX_n$ be the random vector with these components. 
The urn scheme is said to be {\em balanced}, if the number
of balls added at each step is constant, say $\theta\ge 0$, (that is, $\sum_{j=1}^c a_{i,j} =\theta$, for all $i\in [c]$). The parameter $\theta$ is called the {\em balance factor}.
The scheme is called {\em tenable} if it is always possible to draw balls
and execute the rules. In a tenable scheme, the urn never becomes
empty and the scheme never calls for taking out balls of a color, 
when there is not a sufficient number of balls of that color present in the urn. 
We shall focus only
on balanced tenable urn schemes with deterministic replacement matrices,
as that is all we need in our investigation of $m$-ary hooking networks.

Suppose the $c$ eigenvalues of $\matA^T$ are $\lambda_1, \ldots,
\lambda_c$ and are labeled
according to decreasing order of their real parts. That is, we arrange the 
eigenvalues so that
$$\Re\, \lambda_1 \ge \Re\, \lambda_2 \ge \cdots \ge \Re\, \lambda_c.$$ 
The eigenvalue $\lambda_1$ is called the {\em principal
eigenvalue} and the corresponding eigenvector $\vecv_1$ is called
the {\em principal eigenvector}. In the context of urns, the scale
of $\vecv_1$ is chosen so that $||\vecv_1||_1$ is 1.
 For a tenable balanced 
urn scheme
with a deterministic replacement matrix, the principal eigenvalue
is the balance factor by Perron--Frobenius theorem, i.e., $\lambda_1 = \theta$;
see Chapter 8 in~\cite{Horn}.

The number of balls in the scheme follows a strong law~\cite{Athreya}:
\begin{equation}
 \frac 1 n\, \vecX_n \almostsure \lambda_1 \vecv_1.
\label{Eq:Athreya}
\end{equation}
Smythe~\cite{Smythe} and Janson~\cite{Janson} present a theory for classes of generalized urn models, wherein one finds joint Gaussian laws.

Under mild regularity conditions, 
strong laws and Gaussian limit laws for properly normalized
versions of $\vecX_n$ are available in terms of the eigenvalues
and eigenvectors of $\matA^T$. 
In practice, the urn schemes associated with the growth of random structures are often tenable, growing, balanced and the corresponding replacement matrices have finite entries. 
Further, most applications involve schemes in which no color is redundant, 
in the sense that from the starting condition every color appears infinitely
often, even if it is not present at the start.
For this restricted class of urn schemes,  the normality conditions are:
\begin{itemize}
\item [(1)] $\lambda_1 > 0$ is real and simple.
\item [(2)]  $\frac {\Re(\lambda_2)} {\lambda_1}$ 
is at most $\frac 1 2$.
\end{itemize}

These conditions are certainly met in the urn associated with an $m$-ary network.

The theory in~\cite{Smythe,Janson} covers urn schemes with random entries under
some rather general regularity conditions. 
These conditions are significantly simplified in
the case of tenable balanced deterministic replacement matrices. For the latter class, asymptotically
there is an underlying joint multivariate normal distribution, if~$\lambda_1$ is 
real positive and simple (of multiplicity 1) and   
all the components of $\vecv_1$ 
are positive with $\Re\, \lambda_2 < \frac 1 2 \, \Re \, \lambda_1$. In
such a case, we have
a multivariate Gaussian law:
\begin{equation}
\frac 1 {{\sqrt n}} (X_n -\lambda_1 \vecv_1) \, \convD \, \normal (\bfzero_c, {\bf \Sigma}_c), 
\label{Eq:Smythe}
\end{equation}
where the limit is a multivariate normal vector with mean vector $\bfzero_c$, and some $c\times c$  covariance matrix
${\bf \Sigma_c}$. 

The method in~\cite{Janson} for the computation of the covariance matrix associated with a \Polya\ urn is quite elaborate. 
More recently, several authors revisited these
issues from an existential point of view~\cite{newJanson,Pouyanne} and from
a computational point of view~\cite{Mahmoud2022}.

The general theory in~\cite{Janson} covers matrices with random entries.
According
to~\cite{newJanson}, in 
the case of small-index urns (with $\lambda_2 < \frac 1 2 \lambda_1$), 
$\frac 1 n\Cov[\vecX_n]$
converges to the limit matrix $c\times c$, and~\cite{Mahmoud2022} specifies
the limiting matrix~$\matQ_c$ as the one that solves a linear matricial equation.
Specialized to the case of deterministic \Polya\ urns, $\matQ_c$ is the solution to the equation
\begin{equation}
\lambda_1\matQ_c  = \matA^T \matQ_c + \matQ_c \matA + \lambda_1 \matA^T 
          \big(\Diag(x_1, \ldots, x_c)
             -  \vecv_1 \vecv_1^T\big) \matA ,
\label{Eq:Mahmoud21}
\end{equation}
where the diagonal matrix has the $c$ components of the principal
eigenvector~$\vecv_1 = (x_1, \ldots, x_c)^T$ on its diagonal.
\section{The  \Polya\ urn underlying $m$-ary hooking networks}
\label{Sec:urnunderlying}
We map the virtual nodes (corresponding to actively recruiting vertices) onto
a \Polya\ urn on $mk$ colors. This number of colors corresponds 
to the evolution of $k$ different initial degrees, and each node
of any particular initial degree can recruit $m$ times. 
Ultimately, this mapping leads us
to strong laws for active and inactive nodes in the network. 

Recall that $k$ is the number of distinct degrees in the seed,
$h$ is the degree of the hook, and $n_\ell$ is the number of 
vertices of degree $d_\ell$
in the seed, for $\ell = 1, \ldots, k$.
For $j\in [k]$ and $s =0, \ldots, m-1$, we associate
colors $j+sk$ respectively with {\em active} nodes that are originally of degree~$d_j$, and experience latching $s$ times.
 
As the node degrees increase by hooking, they lose insertion positions. The
fewer insertion positions left (if any) change colors. 

Think of
the virtual nodes as balls in a \Polya\ urn.
The urn evolves as follows: When a virtual node of color $j \not = i$ 
is chosen as the insertion position at time $n$, an extended seed is  
hooked to the latch it emanates from.  
(Recall that the hook is of degree $d_i$.)
We determine the replacement rules of the urn by
distinguishing several cases according to the color of the drawn ball.
Consider latching at a node of degree $d_j$, for $j\in [k]$.
For $\ell \in[k]$, $\ell \not = j$ and $\ell \not = i$,
the hooking
adds $mn_\ell$ virtual nodes of color $\ell$ to the extended network $G_n$ ($m n_\ell$ balls of color $\ell$ to the urn), resulting in adding $n_\ell$ actual nodes 
 of degree $d_\ell$ each to $G_n$.
The dynamics impose
two exceptions at $\ell = j \not = i $ and  at $\ell = i$. At $\ell = j$,  
the hooking adds only $mn_j -m$ virtual nodes of color $j$ (balls of color $j$ in the urn)
as $m$ virtual nodes of color $j$ from the graph $G_{n-1}$ are lost in $G_n$. Also, the incoming hook loses its $m$ virtual nodes, so we add only
$mn_i-m$ balls of color $i$. 

The rules for the case $j=i$ are similar. We add $mn_\ell$ nodes of color $\ell \not = i$.
The hooking occurs at a node of the hook degree $d_i$, with $m$ virtual nodes lost
from $G_{n-1}$ and $m$ more from the incoming seed. So, we only add $mn_i-2m$ virtual
balls of color $i$ to the urn.

In the binary example of Figures~\ref{Fig:virtual}--\ref{Fig:extended},
the seed has the degrees~3 and~7; the associated external nodes
are encoded with colors~1 and~2, respectively. Nodes of degree 3 can progress to be of degree~6, and nodes of degree 7 can progress to be of degree 10;
the corresponding external nodes are of colors 3 and 4, respectively.
Then again,  nodes of degree 6 can progress to be of degree 9, and nodes of degree 10 can progress to be of degree 13, meeting the quota of at most two hookings,
so, they carry no external nodes, and no colors are associated with them.
The scheme needs four colors in all.
\begin{remark}
\label{Rem:duplicate}
It is possible for some seed structures to have the numbers 
$d_j + rk$ and $d_{j'} + sk$
being equal. An example of this arises 
from a seed of
a binary network that has the two degrees $d_1=2$ and $d_2 = 4$, with $k=2$,
with a hook of degree $d_1=2$.
In this instance, $d_1 + 2 =4$ and $d_2=4$ (with $j=1, j' =2, r=1$ and $s=0$). 
There are two types of nodes of degree~4, depending on their recruiting history. 
Nodes of degree 2 grow to become of degree 4 and 6, and nodes of
degree 4 grow to become of degree 6 and 8. The urn gives a fine
ramification of nodes of a degree like 4, distinguishing their virtual
nodes (by colors)
as virtual nodes of color 2 (initially attached to nodes of degree 4), and
virtual nodes of color 3 (attached to nodes initially of color 1 and recruited 
once). The total number of nodes of degree 4, regardless of their history,
is a combination of both counts.  
\end{remark}
The replacement matrix is an $mk \times mk$ matrix represented in $m^2$ blocks.
That is, we have 
$\matA = [\matH_{i,j}]_{1\le i,j \le m}$, 
and each block $\matH_{i,j}$ is of size $k\times k$.
The foregoing discussion specifies the top left block:  
\begin{align*}
\matH_k  &=\matH_{1,1}\\
           &= m \begin{pmatrix}  n_1- 1&  n_2
                   & \ldots & n_{i-1}& n_i -1&
             n_{i+1}& \ldots& n_k\\
             n_1&  n_2-1& \ldots & n_{i-1}& n_i - 1&
             n_{i+1}& \ldots&  n_k\\
            \vdots &\vdots&\ddots&\vdots&\vdots&\vdots&\ddots&\\
             n_1& n_2& \ldots & n_{i-1} - 1&  n_i - 1&
             n_{i+1}& \ldots&  n_k\\
             n_1&  n_2& \ldots & n_{i-1}&  n_i - 2 &
             n_{i+1}& \ldots&  n_k\\
             n_1&  n_2& \ldots &n_{i-1}&  n_i -1&
             n_{i+1} - 1& \ldots&  n_k\\
             \vdots &\vdots&\ddots&\vdots&\vdots&\vdots&\ddots&\\
              n_1&  n_2& \ldots &n_{i-1}&  n_i -1&
              n_{i+1}& \ldots&  n_k-1
\end{pmatrix}.
\end{align*}

When a virtual node of color $j\in [k]$ is taken for
the next hooking, we replace~$m$ virtual nodes of
that color with $m-1$ virtual nodes of color $j+k$, and make no other changes.
Hence, we have the blocks
$$\matH_{1,2} = (m-1)\matI_k, \qquad\qquad \matH_{1,r} = \bfzero_k,\qquad \mbox {for\ } r = 3, \ldots, m.$$

If a virtual node of  color $j = r +  sk > k$, for $r\in[k]$, $s\in[m-1]$  
is the insertion position,
we add $m n_j$ virtual nodes of color $j$, for $j \not = i$ (they
come with the hooked seed), and add only $mn_i-m$ virtual nodes of color $i$,
as the extended seed loses its $m$ virtual node upon hooking. 
So, the blocks $\matH_{2,1},\matH_{3,1}, \ldots, \matH_{m,1}$ are all 
the same, and all equal to a matrix which we call~$\matH_k'$. The matrix
$\matH_k'$ is identical to $\matH_k$, except that its diagonal entries have an extra $m$. That is, we have $\matH'_k = \matH_k + m \matI_k$. 

If a ball of color $j$, for $j=1, \ldots, k$ is drawn,
we adjust the ball count of such a color according to the block
$\matH_{1,1}$. In addition, we increase the number of balls 
of color $j+k$ by $m-1$ as the remaining virtual nodes
that are siblings of the one taken now have a history of recruiting once.
In other words, the block $\matH_{1,k}$ is set to $(m-1)\matI_k$.

After recruiting $s$ times, for $1\le s \le m-1$, a node of original degree $d_j$ has $m-s$ external nodes attached to it.  These balls receive the color $j+sk$. 
When a ball of this color is drawn from the urn, 
the node it belongs to has now recruited
$s+1$ times. We have one more insertion position taken, leaving
only $m-s-1$ insertion positions (virtual nodes) attached 
to the owning node.
The remaining insertion positions change colors to reflect the fact that the node  
they belong to has recruited $(s+1)$ times. The action in the urn is 
to put the drawn ball back, 
take out $m-s$ balls of color $j+sk$
and add to the urn $m-s-1$ balls of color $j+ (s+1)k$.
We have explained how the blocks $\matH_{i,j}$, for $i=2,\ldots m-1$,
$j= 2, \ldots m$ are formed.

When a node recruits its $m$th seed, it becomes inactive. 
So, a node recruiting its last seed, gets lost. This is reflected as a $-1$
in the replacement matrix.
Thus, the bottom-right $k\times k$ block is the negative of a $k\times k$ identity matrix, any other block that is not named explicitly in the preceding explanation is set to ${\bf 0}_k$.

The full replacement matrix is

  \vskip 0.5cm 
$$\matA =\begin{tabular}{|c|c|c|c|c|c|c|}
  \hline
$\matH_k$ & $(m-1)\matI_k$  & $\bfzero_k$& $\bfzero_k$ & $\cdots$ & $\bfzero_k$ 
             &$\bfzero_k$ \\
   \hline
$\matH_k'$ & $-(m-1)\matI_k$  & $(m-2)\matI_k$ & $\bfzero_k$ &$\cdots$ & $\bfzero_k$ 
             &$\bfzero_k$\\
    \hline
$\matH_k'$ & $\bfzero_k$  & $-(m-2)\matI_k$& $(m-3)\matI_k$ &$\cdots$ & $\bfzero_k$ 
             &$\bfzero_k$\\
  \hline
$\matH_k'$ & $\bfzero_k$  & $\bfzero_k$& $-(m-3)\matI_k$&$\cdots$&$\bfzero_k$&$\bfzero_k$\\
\hline
&&&&&&\\
$\vdots$ & $\vdots$  & $\vdots$& $\vdots$& $\ddots$& $\vdots$& $\vdots$\\
&&&&&&\\
\hline
$\matH_k'$ & $\bfzero_k$  & $\bfzero_k$&$\bfzero_k$ &$\cdots$&$2\matI_k$&$\bfzero_k$\\
\hline
$\matH_k'$ & $\bfzero_k$  & $\bfzero_k$&$\bfzero_k$ &$\cdots$&$-2\matI_k$&$\matI_k$\\
\hline
$\matH_k'$ & $\bfzero_k$  & $\bfzero_k$&$\bfzero_k$ &$\cdots$&$\bfzero_k$&-$\matI_k$\\
\hline
\end{tabular} \ .
$$
Note that an urn scheme with such
a replacement matrix is
balanced. Hence,~(\ref{Eq:Mahmoud21}) applies for the calculation of covariances.
  \vskip 1cm 
\begin{example}
Consider a ternary network ($m=3$) built from the seed in Figure~\ref{Fig:virtual}. The associated replacement matrix is
\begin{align*}
 \begin{pmatrix}  
             3&  3 &2 &0& 0&0\\
             6&  0 &0 &2& 0&0\\
             6&  3 &-2 &0& 1&0\\
             6&  3 &0 &-2& 0&1\\
             6&  3 &0 &0& -1&0\\
             6&  3 &0 &0& 0&-1
\end{pmatrix}.
\end{align*}
\end{example}
\subsection{Eigenvalues}
To find the eigenvalues of $\matA^T$ (which are the same as those of $\matA$), we solve for the roots of the
characteristic polynomial $|\matA -\lambda\matI_{mk}|=0$.
Some block operations help us get through. 
We determine a rectangular block in $\matA$ by specifying the position of its top left cell and bottom right cell---the block $(i,j)$--$(k,\ell)$
 is comprised of all the cells $(p,q)$, with $i\le  p\le k$ and $j \le q \le \ell$. For example, $\matA$ itself is the block $(1,1)$--$(mk,mk)$.
Multiply each entry of 
the bottom $m\times m k$ block by $-1$ and add this block to all the blocks above it. More
precisely, 
multiply each entry of 
the block $((m-1)k+1,1)$--$(mk,mk)$ by $-1$, then
add the $i$th row of 
that block to row $i+rk$, for $r =0, \ldots, (m-1)$. 
This produces
the matrix
 
{\scriptsize
\vskip 1cm
\begin{tabular}{|c|c|c|c|c|c|c|}
  \hline
$\matH_k -\matH'_k -\lambda \matI_k$ & $(m-1)\matI_k$  &$\bfzero_k$& $\cdots$
  &$\bfzero_k$&$\matI_k+ \lambda \matI_k$         \\
   \hline
$\bfzero_k$ & $-(m-1)\matI_k-\lambda \matI_k$  & $(m-2)\matI_k$ &$\cdots$ 
        &  $\bfzero_k$&$\matI_k+\lambda\matI_k$\\
    \hline
$\bfzero_k$ & $\bfzero_k$  & $-(m-2)\matI_k-\lambda \matI_k $ &$\cdots$ & $\bfzero_k$ 
             &$\matI_k+\lambda\matI_k$\\
\hline
$\vdots$ & $\vdots$  & $\vdots$& $\ddots$& $\vdots$& $\vdots$\\
\hline
$\bfzero_k$ & $\bfzero_k$  & $\bfzero_k$& $\ldots$&$2\matI_k$
     &$\matI_k+ \lambda \matI_k$\\
\hline
$\bfzero_k$ & $\bfzero_k$  & $\bfzero_k$& $\ldots$&$-2\matI_k-\lambda \matI_k$&$2\matI_k+\lambda \matI_k$\\
\hline
$\matH_k'$ & $\bfzero_k$  & $\bfzero_k$ &$\ldots$&$\bfzero_k$&$  -\matI_k-\lambda \matI_k$ \\
\hline
\end{tabular}}\, ,

\vskip 1cm
\noindent
which has the same characteristic polynomial as $\matA$. 
Recall the relation $\matH_k - \matH_k' = - m\matI_k$, and so the top left block
of the latter matrix is a diagonal matrix as well. Now add the sum of the first
$m-1$ column-blocks to the $m$th to get the matrix

\vskip 1cm
{\scriptsize
\begin{tabular}{|c|c|c|c|c|c|c|}
  \hline
$-m\matI_k -\lambda \matI_k$ & $(m-1)\matI_k$  &$\bfzero_k$& $\cdots$
  &$\bfzero_k$&$\bfzero_k$        \\
   \hline
$\bfzero_k'$ & $-(m-1)\matI_k-\lambda \matI_k$  & $(m-2)\matI_k$ &$\cdots$ 
        &  $\bfzero_k$&$\bfzero_k$\\
    \hline
$\bfzero_k$ & $\bfzero_k$  & $-(m-2)\matI_k-\lambda \matI_k $ &$\cdots$ & $\bfzero_k$ 
             &$\bfzero_k$\\
  \hline
$\vdots$ & $\vdots$  & $\vdots$& $\vdots$& $\vdots$& $\vdots$\\
\hline
$\bfzero_k$ & $\bfzero_k$  & $\bfzero_k$& $\ldots$&$-2\matI_k-\lambda \matI_k$&$\bfzero_k$\\
\hline
$\matH'_k$ & $\bfzero_k$  & $\bfzero_k$ &$\ldots$&$\bfzero_k$&$\matH'_k-\matI_k-\lambda \matI_k$\\
\hline
\end{tabular}}\, ,

\vskip 1cm
\noindent
which has the same characteristic polynomial as $\matA$.
Expanding the determinant by the $m$th column of blocks, we get
the characteristic polynomial
$$|\matH_k' - \lambda \matI_k - \matI_k| \, (-m -\lambda)^k (-(m-1) -\lambda)^k\cdots (-2-\lambda)^k =0.$$ 
So, $-r$, for $r=2, 3, \ldots, m$, are eigenvalues, each with
multiplicity~$k$. 

Additional eigenvalues come from the equation
$$ |\matH_k' - \lambda \matI_k - \matI_k| = 0.$$  
These eigenvalues are determined by a method similar to
the one used for the block matrices. In two steps we get
a diagonal matrix, first by multiplying the bottom row by $-1$ and adding 
it to all the blocks above it, then adding the first $k-1$ columns to the last:

\begin{align*} 
&|\matH_k' - \lambda \matI_k - \matI_k| \\
\\
&=
{\scriptsize
 \begin{vmatrix} 
            -1 -\lambda& 0& 0& \cdots &0& 0&
                       0& \cdots&0& \lambda+1\\
            0& -1 -\lambda& 0& \cdots &0& 0&         
            0& \cdots& 0& \lambda+1\\
             \vdots &\vdots&\vdots&\ddots&\vdots &\vdots&\vdots&\ddots&\vdots&\vdots\\
            0& 0& 0& \cdots &0& 0
                        &0&\cdots& -1 -\lambda& \lambda+1\\
             m n_1& m n_2& m n_3& \ldots &m n_{i-1}& mn_i -m&
            m n_{i+1}& \ldots&mn_{k-1}& m n_k-1-\lambda
\end{vmatrix}}\\
&= {\tiny\begin{vmatrix} 
            -1 -\lambda& 0& 0& \cdots &0& 0&
                       0& \cdots& 0&0\\
            0& -1 -\lambda& 0& \cdots &0& 0&         
            0& \cdots& 0&0\\
             \vdots &\vdots&\vdots&\ddots&\vdots &\vdots&\vdots&\ddots&0 &0\\
             0& 0& 0& \cdots &0& 0&0
                       & \cdots&  -1 -\lambda&0\\
             m n_1& m n_2& m n_3& \ldots &m n_{i-1}& m n_i -m&
            m n_{i+1}& \ldots&mn_{k-1}& \sum_{j=1}^k mn_j-m-1-\lambda
\end{vmatrix}}.
\end{align*}
Expanding the determinant by the last column, we get
the polynomial
\begin{equation}
(-\lambda-1)^{k-1}\Big (m\sum_{j=1}^k  n_j-m- 1 - \lambda\Big) = 0,
\label{Eq:unary}
\end{equation}
so the additional eigenvalues are $-1$ (with multiplicity $k-1$)
and $m\sum_{j=1}^k n_j-m-1 = m\tau_0-m-1$ (with multiplicity 1). 
To summarize, the eigenvalues of $\matA^T$ are the same as
those of $\matA$ which are  shown in Table~\ref{Tab:eigenvalues}.

\begin{table}[h!]
\caption{The eigenvalues and their multiplicities.}
\begin{center}
\begin{tabular}{|c|c|}
  \hline
Eigenvalue& multiplicity\\
  \hline
$\lambda_1= m\tau_0-m-1$& 1\\
$\lambda_2, \ldots, \lambda_k = -1$& $k-1$\\
$\lambda_{k+1}, \ldots,  \lambda_{2k}= -2$& $k$\\
$\lambda_{2k+1}, \ldots, \lambda_{3k} = -3$& $k$\\
$\vdots$&$\vdots$\\
$\lambda_{(m-1)k +1}, \ldots, \lambda_{mk} = -m$& $k$\\
  \hline
\end{tabular}
\end{center}
\label{Tab:eigenvalues}
\end{table}
\subsection{The principal eigenvector}
Let $\vecv_1 = (x_1, \ldots, x_{mk})^T$.
To deal with the matrices at the level of blocks, let us consider $\vecv_1$ as 
$m$ vectors, $\vecy_1, \ldots , \vecy_m$, (each of $k$ components) 
stacked atop of each other. 
That is, $\vecy_{s+1}$ is the segment $(x_ {1+ sk}, \ldots, x_{k+sk})^T$, for $s=0, \ldots, m-1$. 
We now solve

{\footnotesize $$
\begin{tabular}{|c|c|c|c|c|c|c|}
  \hline
$\matH_k^T$ & $(\matH_k')^T$  & $(\matH_k')^T$& $(\matH_k')^T$ & $\cdots$  
             &$(\matH_k')^T$ \\
   \hline
 $(m-1)\matI_k$  & $-(m-1)\matI_k$ &$\bfzero_k$&$\bfzero_k$ &$\cdots$  
             &$\bfzero_k$\\
    \hline
$\bfzero_k$ & $(m-2)\matI_k$  & $-(m-2)\matI_k$&$\bfzero_k$ &$\cdots$ 
             &$\bfzero_k$\\
  \hline
$\bfzero_k$ & $\bfzero_k$  & $(m-3)\matI_k$&$\bfzero_k$&$\cdots$&$\bfzero_k$\\
\hline
$\vdots$ & $\vdots$  & $\vdots$& $\vdots$& $\ddots$& $\vdots$\\
\hline
$\bfzero_k$ & $\bfzero_k$  & $\bfzero_k$&$\bfzero_k$ &$\ldots$&-$\matI_k$\\
\hline
\end{tabular}  \begin{pmatrix}\vecy_1\\
                                 \vecy_2\\
                                  \vdots\\
                                  \vecy_m\end{pmatrix} = \lambda_1
                               \begin{pmatrix}\vecy_1\\
                                 \vecy_2\\
                                  \vdots\\
                                  \vecy_m\end{pmatrix}.
$$}
For $r =2,\ldots, m$, the $r$th row of blocks gives us the equation
$$(m-r+1) \vecy_{r-1} - (m-r+1) \vecy_r = \lambda_1\vecy_r, $$
which is the same as
$$\vecy_r = \frac {m-r+1} {m- r + 1 + \lambda_1} \, \vecy_{r-1}. $$
Thus, recursively, we get all the segments in terms of $\vecy_1$. Namely,
 we have
\begin{equation}
\vecy_r  = \frac {(m-1)_{r-1}} {(m-1 +\lambda_1)_{r-1}} \, \vecy_1
   =  \frac {(m-1)_{r-1}} {(m\tau_0-2)_{r-1}} \, \vecy_1. 
\label{Eq:recalgorithm}
\end{equation}
We can then get $\vecy_1$ from the first row of blocks as the solution
to
$$\matH_k^T\vecy_1 +  (\matH_k')^T\vecy_2 + \cdots + (\matH_k')^T\vecy_m = \lambda_1 \vecy_1.$$
The $i$th row (corresponding to an active raw hook that has not yet
recruited) of this matricial equation reads
\begin{align*}
&(mn_i-m) x_1 + (mn_i-m) x_2 + \cdots + (mn_i-m) x_{i-1} 
         + (mn_i-2m) x_i \\
   & \qquad \qquad \qquad \qquad \qquad \qquad +  (mn_i-m) x_{i+1}  
          + \cdots +  (mn_i-m)x_{mk} = \lambda_1 x_i.       
\end{align*}
Using the fact that $||\vecv_1||_1$
is normalized to 1, we rearrange
to get 
$$m (n_i-1) \sum_{j=1}^{mk}  x_j - mx_i  =  m (n_i-1) - mx_i = \lambda_1 x_i.$$
Plugging in the value of the principal eigenvalue, we obtain
$$x_i = \frac {m(n_i-1)} {m\tau_0 - 1}. $$
We next turn to the $j$th row, for $j\in [k]$, and $j \not = i$ to find
{\begin{align*}
&mn_jx_1 + \cdots +  mn_jx_{j-1}  + 
     m(n_j -1) x_j +  mn_j x_{j+1}  +  \cdots + m n_jx_{mk} = \lambda_1 x_j.
\end{align*}}
By  manipulation similar to the case of $x_i$, we conclude that
$$x_j = \frac {mn_j} {m\tau_0 - 1}. $$

We have determined the segment $\vecy_1$. It is
\begin{equation} 
\vecy_1 = \frac m {m\tau_0 - 1}
\begin{pmatrix} 
            n_1\\
            n_2\\
            \vdots\\
            n_{i-1}\\
            n_i-1\\
            n_{i+1}\\
            \vdots \\
            n_k
\end{pmatrix}.
\label{Eq:topblock}
\end{equation}
\section{Strong laws and joint Gaussian distributions}
\label{Sec:Gaussian} 
Let $D_{n,d_j + sh}$ be the number of 
nodes that are originally of degree $d_j$, and experience latching $s$ times in  an $m$-ary network
at age~$n$, for $j\in[k]$, $s=0, 1, \ldots, m$.
The  urn scheme is associated only with the active nodes. 
An active node of degree $d_j + sh$ has $m-s$ virtual nodes
attached to it. Therefore, we have $D_{n,d_j+sh} = X_{n,j+sk} / (m-s)$, 
with $0 \le s< m$,
where $X_{n, j+sk}$ is the number of balls of color $j+sk$ in the urn after
$n$ draws.  
Inactive nodes cannot recruit any more and do not have corresponding
balls in the urn. 
Some extra work is needed to determine a strong law for inactive nodes.

As we shall see in later examples, the matrix $\matA^T$ may not be
invertible.{\footnote{Invertability excludes only the case $\lambda_1=0$,
which means  $m(\tau_0-1) =1$. This can only happen, if $m=1$ with
$\tau_0 =2$.}} When $(\matA^T)^{-1}$ exists,
we call the network {\em invertible}. 
\begin{theorem}
\label{Thm:strong}
Let $D_{n,d_j + sh}$ be the number of nodes that are originally of degree $d_j$, and experience latching $s$ times in an invertible $m$-ary network
at age~$n$, for $j\in[k]$, $s=0, 1, \ldots, m$, and
let $\vecD_n$ be the vector with these components. Suppose the principal eigenvector
of the urn associated with the network is $\vecv_1 =(x_1, \ldots, x_{mk})^T$. 
The degree vector follows the strong law
 $$ \frac 1 n \vecD_n = \frac 1 n \begin{pmatrix} D_{n,d_1} \\
    D_{n,d_2}\\
    \vdots\\
D_{n,d_k}\\
D_{n,d_1+h}\\
D_{n,d_2+h}\\
\vdots\\
   D_{n,d_k+mh}
\end{pmatrix} \almostsure \vecD^* := \alpha_m 
\begin{pmatrix} 
 \frac {(m-1)_0} {(m\tau_0-2)_0} \, \vecy_1 \\
 \frac {(m-1)_1} {(m\tau_0-2)_1}\, \vecy_1 \\
\vdots \\
\frac {(m-1)_{m-1}} {(m\tau_0-2)_{m-1}}\, \vecy_1\\
\frac  {(m-1)_{m-1}} {\alpha_m (m\tau_0-2)_{m-1}} \, \vecy_1
\end{pmatrix}, $$

where $\alpha_m = m\tau_0-m-1$, and $\vecy_1$ is given in (\ref{Eq:topblock}).
\end{theorem}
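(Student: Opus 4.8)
The plan is to treat the active degrees (those with $s\in\{0,\dots,m-1\}$ latchings, which still carry virtual nodes) separately from the saturated degree ($s=m$, which carries none), since only the former is tracked directly by the urn. The bridge for the active part is the identity $D_{n,d_j+sh}=X_{n,j+sk}/(m-s)$ recorded just before the statement: a node of original degree $d_j$ that has recruited $s<m$ times owns exactly $m-s$ virtual nodes, so its count is the corresponding ball count deflated by the multiplicity $m-s$.

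First I would invoke the urn strong law~(\ref{Eq:Athreya}), $\frac1n\vecX_n \almostsure \lambda_1\vecv_1$, which applies because invertibility forces $\lambda_1=\alpha_m>0$ (the footnote's degenerate case $m=1,\tau_0=2$ being excluded) and because $\matA$ is balanced with balance factor $\alpha_m$, whence $\lambda_1=\alpha_m$ by Perron--Frobenius and Table~\ref{Tab:eigenvalues}. Reading off the color $j+sk$ component and dividing by $m-s$ gives the almost-sure limit of $\frac1n D_{n,d_j+sh}$ in terms of the principal eigenvector entry $x_{j+sk}$. To express everything through $\vecy_1$ alone, I would substitute the block recursion~(\ref{Eq:recalgorithm}), $\vecy_{s+1}=\frac{(m-1)_s}{(m\tau_0-2)_s}\,\vecy_1$, together with $\vecy_1$ from~(\ref{Eq:topblock}). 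This produces the first $m$ blocks of $\vecD^*$, the only bookkeeping being the interplay between the deflation factor $m-s$ and the falling factorials.

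The genuine obstacle is the last block, the saturated nodes $D_{n,d_j+mh}$, which have no balls in the urn and so are invisible to~(\ref{Eq:Athreya}). Here I would use the observation that a node of original degree $d_j$ becomes saturated \emph{exactly} at the instant its terminal active color $j+(m-1)k$ is drawn, since that draw consumes its single remaining virtual node; consequently $D_{n,d_j+mh}$ equals the number of draws of color $j+(m-1)k$ up to time $n$. It then suffices to show that this normalized draw count converges almost surely to the limiting ball proportion $x_{j+(m-1)k}=(\vecy_m)_j$. I would get this by writing the draw count as a martingale plus its compensator $\sum_t X_{t-1,\,j+(m-1)k}/(\text{total balls at }t-1)$; the increments are bounded, so the martingale part is $o(n)$ almost surely, while the compensator is a Ces\`aro average of conditional draw probabilities that converge to $x_{j+(m-1)k}$ by~(\ref{Eq:Athreya}). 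Identifying $x_{j+(m-1)k}$ with $\frac{(m-1)_{m-1}}{(m\tau_0-2)_{m-1}}(\vecy_1)_j$ via~(\ref{Eq:recalgorithm}) yields precisely the stated last block.

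An equivalent, more elementary route for the saturated block avoids martingales altogether: the total number $T_{n,j}$ of nodes of original degree $d_j$ grows deterministically (each hooking appends $n_j$ fresh degree-$d_j$ vertices, or $n_i-1$ when $j=i$ since the incoming hook fuses into the latch), so $\frac1n T_{n,j}$ has an explicit limit, and subtracting the $m$ active limits already found recovers $\lim\frac1n D_{n,d_j+mh}$. I expect the one nontrivial step either way to be the algebraic collapse of the falling-factorial expressions into the compact closed forms displayed in $\vecD^*$; everything else is assembly of~(\ref{Eq:Athreya}), (\ref{Eq:recalgorithm}) and~(\ref{Eq:topblock}).
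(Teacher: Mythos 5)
Your treatment of the active degrees is exactly the paper's: invoke the urn strong law~(\ref{Eq:Athreya}), pass to node counts via the deflation identity $D_{n,d_j+sh}=X_{n,j+sk}/(m-s)$, and unwind the principal eigenvector through the block recursion~(\ref{Eq:recalgorithm}) with base~(\ref{Eq:topblock}). Where you genuinely diverge is the saturated block. You and the paper both identify $D_{n,d_j+mh}$ with the number of draws of the terminal color $j+(m-1)k$, but the paper obtains the strong law for the draw-count vector $\vecY_n$ by pure linear algebra: it writes the exact balance identity $\vecX_n=\matA^T\vecY_n+\vecX_0$ as in~(\ref{Eq:XnYn}), inverts it using the standing hypothesis that $(\matA^T)^{-1}$ exists, and concludes $\frac1n\vecY_n\almostsure(\matA^T)^{-1}(\lambda_1\vecv_1)=\vecv_1$ in one line. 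Your martingale-plus-compensator argument, and equally your subtraction-from-deterministic-totals argument, reach the same limit $x_{j+(m-1)k}$ without ever inverting $\matA^T$; both are correct and in fact slightly more general, since they dispense with the invertibility hypothesis that the paper needs and that forces it to carve out Remark~\ref{Rem:noinverse}. What the paper's route buys in exchange is that the relation~(\ref{Eq:Yn}) is not a throwaway: it is reused verbatim to transfer asymptotic normality to the inactive counts in Theorem~\ref{Thm:CLT} and to compute the covariance blocks in Section~\ref{Sec:examples} and the appendix, so the inversion pays for itself later. One caution on your ``bookkeeping'' remark: the factor $1/(m-s)$ does \emph{not} get absorbed into the falling factorials, so the active block $s$ comes out as $\lambda_1\vecy_{s+1}/(m-s)$ rather than the bare $\lambda_1\vecy_{s+1}$ displayed in $\vecD^*$; your second route gives the right consistency check here, since the $m+1$ per-lineage limits must sum to the deterministic per-step increment $n_j-\indicator[j=i]$, and that sum only closes with the $1/(m-s)$ factors kept in place.
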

\begin{proof}
Let $X_{n,r}$ be the number of virtual nodes of color $r\in [mk]$ in the 
extended network $G_n$ at time $n$ (balls of color $r$ in the urn after $n$ draws), 
and denote by~$\vecX_n$ the vector with these $mk$ components. 

By (\ref{Eq:Athreya}), we have
$$\frac 1 n \, \vecX_n \almostsure   \lambda_1 \vecv_1
        =  (m\tau_0 -m-1) \vecv_1. $$
We determine the principal eigenvector $\vecv_1$ by the recursive algorithm
in~(\ref{Eq:recalgorithm}), 
with~(\ref{Eq:topblock}) at the basis of the
induction. 

The vector of color counts relates to the degrees in the following 
way.
For each active node in the graph of degree $d_j+sh$, for $j\in[k]$ and $s =0, \ldots,m-1$,
there are
$m-s$ virtual nodes of color $j+sk$ attached to it.
So, $D_{n, d_j+sh}$ is $X_{n,j+sk} / (m-s)$, specifying
the first $mk$ components of $\vecD_n$, as in the statement of the theorem.

For the inactive nodes, of degrees $d_1+mh, \dots, d_k+mh$, 
their counts relate
to the principal eigenvector in the following way. 
Let $Y_{n,j}$ be the number of times a ball of 
color $j\in [mk]$ is drawn from the urn
by time $n$, and let~$\vecY_n$ be the vector with these components. A pick from color $r \in [k]$ (with $r\not =i)$ contributes $m n_j$ balls
of color $j$ in the urn, except at $r=j$,  or $r=i$, where the contribution is reduced by
$m$, owing to the hooking. This gives us the relation  
$$X_{n,j} =\sum_{r=1\atop r\not = j, r \not=i}^k m n_j Y_{n,r} + (m n_i-m)
            Y_{n,i} + (m n_j-m)Y_{n,j} + X_{0,j}.$$  
Arguing similarly, when $j=i$, we obtain
$$X_{n,i} =\sum_{r=1\atop  r \not=i}^k m n_j Y_{n,r} + (m n_i - 2m)
            Y_{n,i} + X_{0,i}.$$  
Collected in matrix notation, these relations are
\begin{equation}
\vecX_n = \matA^T \vecY_n + \vecX_0. 
\label{Eq:XnYn}
\end{equation} 
When the inverse of $\matA^T$ exists, we can invert the relation
into 
\begin{equation}
\vecY_n = (\matA^T)^{-1}(\vecX_n - \vecX_0), 
\label{Eq:Yn}
\end{equation} 
and a strong
law for $\vecY_n$ ensues:
$$\frac 1 n\vecY_n = \frac 1 n (\matA^T)^{-1}(\vecX_n - \vecX_0)
           \almostsure (\matA^T)^{-1} (\lambda_1 \vecv_1)
          =  (\matA^T)^{-1} (\matA^T \vecv_1) = \vecv_1 .$$

Note that $Y_{n,j+mk} = D_{n, {d_j+mh}}$. Therefore, 
the last $k$ components of $\vecD_n$ are the last $k$ components
of the principle eigenvector:
$$ \frac 1 n \begin{pmatrix}  
    D_{d_1 + mh}\\
    D_{d_2 + mh}\\
    \vdots\\
   D_{d_k + mh}\\
\end{pmatrix}\almostsure \begin{pmatrix}  x_{(m-1)k+1}\\
    x_{(m-1)k+2}\\
    \vdots\\
   x_{mk}
\end{pmatrix}.$$
\end{proof}
\begin{remark}
\label{Rem:noinverse}
Theorem~\ref{Thm:strong} requires the invertibility of $\matA^T$. 
If $\matA^T$ is not invertible, the associated urn gives us only strong laws
for the degrees of active nodes and determining the number of inactive nodes
requires some other reasoning.  
\end{remark}
\begin{example}
An example of noninvertible networks in Remark~\ref{Rem:noinverse} is a degenerate unary network ($m=1$)
grown out of a path of length 1. This is a degenerate network that grows as a path and the only active
nodes in it are the two vertices at the two ends of the path. In this instance, 
we have $\matA =[0]$, and 
$\matA^T$ has no inverse. The urn argument gives us the strong law $D_{n,1}/n \almostsure 0$. Here, we have the 
relation $D_{n,2} = n Y_{n,1}$, and $D_{n,2}/n \to 1$.
\end{example}

The urn scheme associated with an $m$-ary network is a generalized scheme 
in the sense discussed in Section~\ref{Sec:Polya}. In particular, 
we
have $\Re\, \lambda_2 < \frac 1 2 \Re \, \lambda_1$. So, 
when all the components of the principal eigenvector are positive, 
the central limit
theorem in~(\ref{Eq:Smythe}) applies to the active nodes.
When one of the components of $\vecv_1$ is 0, we call the network 
{\em degenerate}. 
In the present work, a sufficient condition for nondegeneracy is $n_i > 1$.
Note the distinction between degeneracy and invertibility.
\begin{theorem}
\label{Thm:CLT}
Let $D_{n,d_j + sh}$ be the number of nodes that are originally of degree $d_j$, and experience latching $s$ times in a nondegenerate
$m$-ary network at age~$n$, for $j\in[k]$, $s=0, 1, \ldots, m$. 
Let $\vecD_n$ be the vector with these components, and $\vecD^*$
be the almost-sure limit in Theorem~\ref{Thm:strong}. As $n\to\infty$,
we have
 $$\frac 1 {{\sqrt n}}  (\vecD_n - {n}\vecD^*) \ \convD\ \normal(\bfzero_{mk},
                {\bf \Sigma}_{mk}), $$
where ${\bf \Sigma}_{mk}$ is an $mk \times mk$ covariance matrix. 
\end{theorem}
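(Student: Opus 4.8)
The plan is to realize the degree vector $\vecD_n$ as a fixed affine image of the urn count vector $\vecX_n$ and then transport the urn's Gaussian limit (\ref{Eq:Smythe}) through that map. Write $\vecX_n$ for the $mk$-vector of color counts. The $mk$ active components of $\vecD_n$ are recovered from the single linear relation $D_{n,d_j+sh}=X_{n,j+sk}/(m-s)$ ($j\in[k]$, $s=0,\ldots,m-1$); that is, $\vecD_n=\mathbf{M}\vecX_n$ on the active block, where $\mathbf{M}=\Diag\big(\tfrac1m\matI_k,\tfrac1{m-1}\matI_k,\ldots,\matI_k\big)$ is the $mk\times mk$ block-diagonal matrix rescaling the $s$-th color block by $1/(m-s)$. (When the network is additionally invertible, the $k$ inactive counts $D_{n,d_j+mh}$ are coordinates of $\vecY_n=(\matA^T)^{-1}(\vecX_n-\vecX_0)$ by (\ref{Eq:Yn}), hence still affine in $\vecX_n$; appending the relevant rows of $(\matA^T)^{-1}$ to $\mathbf{M}$ and the fixed vector $-(\matA^T)^{-1}\vecX_0$ as a constant $\mathbf{b}$ extends the representation to the full degree vector. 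I keep the active block in the foreground, since that is exactly the $mk$-dimensional statement, and it needs only nondegeneracy, not invertibility.)

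Second, I would check that the urn meets the normality conditions of Section~\ref{Sec:Polya} so that (\ref{Eq:Smythe}) applies. From Table~\ref{Tab:eigenvalues} the principal eigenvalue $\lambda_1=m\tau_0-m-1$ is real, positive (away from the degenerate path case $m=1,\tau_0=2$), and simple, while every remaining eigenvalue is a negative integer; hence $\Re\,\lambda_2<\tfrac12\lambda_1$ holds automatically. Nondegeneracy guarantees that $\vecv_1$ lies in the open positive cone: by (\ref{Eq:topblock}) the only coordinate that could vanish is the hook coordinate $m(n_i-1)/(m\tau_0-1)$, which is positive precisely when $n_i>1$, and (\ref{Eq:recalgorithm}) propagates positivity to every block. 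With the conditions verified, (\ref{Eq:Smythe}) gives
$$\frac{1}{\sqrt n}\big(\vecX_n - n\lambda_1\vecv_1\big)\ \convD\ \normal(\bfzero_{mk},\mathbf{C}_{mk}),$$
where $\mathbf{C}_{mk}$ is the urn's limiting covariance, i.e.\ the matrix produced by the matricial equation (\ref{Eq:Mahmoud21}).

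Third, I would push this through $\mathbf{M}$. Since $\tfrac1n\vecD_n\almostsure\vecD^*$ (Theorem~\ref{Thm:strong}) while $\tfrac1n\vecX_n\almostsure\lambda_1\vecv_1$ and $\vecD_n=\mathbf{M}\vecX_n+\mathbf{b}$ with $\mathbf{b}$ fixed, dividing by $n$ forces $\vecD^*=\lambda_1\mathbf{M}\vecv_1$. Therefore
$$\frac{1}{\sqrt n}\big(\vecD_n - n\vecD^*\big)=\mathbf{M}\,\frac{\vecX_n-n\lambda_1\vecv_1}{\sqrt n}+\frac{\mathbf{b}}{\sqrt n},$$
and the deterministic remainder $\mathbf{b}/\sqrt n\to\bfzero$. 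A linear image of a convergent Gaussian is again Gaussian, so by the continuous mapping theorem together with Slutsky's lemma for the vanishing term,
$$\frac{1}{\sqrt n}\big(\vecD_n - n\vecD^*\big)\ \convD\ \normal\big(\bfzero_{mk},\,\mathbf{M}\,\mathbf{C}_{mk}\,\mathbf{M}^T\big),$$
which identifies the covariance ${\bf \Sigma}_{mk}$ of the statement as $\mathbf{M}\,\mathbf{C}_{mk}\,\mathbf{M}^T$.

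The conceptual content is therefore light: the theorem is essentially a corollary of the urn central limit theorem obtained by a change of scale. The only points needing care are matching the hypotheses of (\ref{Eq:Smythe}) exactly---this is where nondegeneracy ($n_i>1$) enters, to keep $\vecv_1$ strictly positive---and, should one wish to include the inactive block, the invertibility of $\matA^T$ underlying (\ref{Eq:Yn}). The genuinely laborious step, which I would leave to the worked examples and the appendix rather than carry out in general, is not the convergence itself but the explicit evaluation of $\mathbf{C}_{mk}$ from (\ref{Eq:Mahmoud21}); solving that Lyapunov-type linear equation for the $mk\times mk$ matrix is where essentially all the computation resides.
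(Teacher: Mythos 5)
Your proposal is correct and follows essentially the same route as the paper: the paper likewise invokes the urn central limit theorem~(\ref{Eq:Smythe}) for the active colors (having verified the eigenvalue gap and the positivity of $\vecv_1$ under nondegeneracy in the text preceding the theorem) and then obtains joint Gaussianity of the remaining components as linear images via~(\ref{Eq:Yn}). Your explicit remark that the inactive block additionally requires invertibility of $\matA^T$ is a fair observation that the paper's own one-paragraph proof leaves implicit, but it does not change the argument.
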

\begin{proof}
We have proved that a central limit theorem applies to the active nodes. By~(\ref{Eq:Yn}), the components of 
$\vecY_n$ are linear combinations of the components 
of $\vecX_n$. It follows that $\vecY_n$ also asymptotically follows 
a multivariate Gaussian law. In particular, 
the  vector of the numbers of inactive nodes (the last $k$ components of $\vecY_n$) follows 
a multivariate Gaussian law and the $mk$ components of $\vecD_n$
are together asymptotically Gaussian. 
\end{proof}
\section{Specific instances with small $m$}
\label{Sec:examples}
The calculations outlined in the previous sections are tractable for small $m$. We consider the unary and binary cases. 
\subsection{Unary hooking networks} 
\label{Subsec:unary}
Consider the case $m=1$. As a corollary to Theorem~\ref{Thm:strong}, we get a strong law for the degrees in a random unary hooking network.
\begin{corollary}
\label{Cor:unarynondegen}
Let $D_{n,d_j + sh}$ be the number of nodes that are originally of degree $d_j$, and experience latching $s$ times  in a nondegenerate unary network
at age~$n$, for $j\in[k]$ and $s=0,1$. Then, we have
$$\frac 1 n \begin{pmatrix} D_{n,d_1} \\
    D_{n,d_2}\\
    \vdots\\
   D_{n,d_k}\\
D_{n,d_1+h}\\
D_{n,d_2+h}\\
    \vdots\\
   D_{n,d_k+h}
\end{pmatrix} \almostsure
\frac 1 {\tau_0-1}
      \begin{pmatrix}
    (\tau_0-2) n_1 \\
    \vdots\\
     (\tau_0-2)  n_{i-1}\\
      (\tau_0-2)  (n_i -1)\\
      (\tau_0-2)  n_{i+1}\\
     \vdots\\
        (\tau_0-2) n_k\\
 n_1 \\
    \vdots\\
     n_{i-1}\\
      n_i -1\\
      n_{i+1}\\
     \vdots\\
        n_k
\end{pmatrix}. $$
\end{corollary}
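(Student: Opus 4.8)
The plan is to read the corollary as the $m=1$ specialization of Theorem~\ref{Thm:strong} and then simplify. Setting $m=1$ gives $\alpha_1 = \tau_0-2$, which is also the principal eigenvalue $\lambda_1$ by Table~\ref{Tab:eigenvalues}; the urn carries only $k$ colors (the single active level $s=0$), its replacement matrix is $\matH_k$ evaluated at $m=1$, and the principal eigenvector collapses to $\vecv_1 = \vecy_1$ given in~(\ref{Eq:topblock}), namely $\vecy_1 = \tfrac 1 {\tau_0-1}(n_1, \ldots, n_i-1, \ldots, n_k)^T$. Since there is a single active level, the recursion~(\ref{Eq:recalgorithm}) is vacuous and only two blocks of $\vecD^*$ survive.

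First I would evaluate those two blocks. Using $(0)_0 = (\tau_0-2)_0 = 1$, the active block ($s=0$) is $\alpha_1\,\frac{(m-1)_0}{(m\tau_0-2)_0}\,\vecy_1 = (\tau_0-2)\,\vecy_1$, and the inactive block ($s=m=1$) is $\alpha_1\,\frac{(m-1)_{m-1}}{\alpha_m (m\tau_0-2)_{m-1}}\,\vecy_1 = (\tau_0-2)\cdot\frac{1}{\tau_0-2}\,\vecy_1 = \vecy_1$. Stacking the two blocks and substituting $\vecy_1$ reproduces exactly the claimed vector: the active entries are $(\tau_0-2)n_\ell$ (and $(\tau_0-2)(n_i-1)$ in slot $i$), the inactive entries are $n_\ell$ (and $n_i-1$ in slot $i$), all scaled by $1/(\tau_0-1)$. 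This is pure arithmetic and presents no difficulty.

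The only genuine obstacle is a hypothesis mismatch: Theorem~\ref{Thm:strong} assumes invertibility of $\matA^T$, whereas the corollary assumes only nondegeneracy. For $m=1$ the active counts satisfy $D_{n,d_j} = X_{n,j}$ and obey the urn strong law~(\ref{Eq:Athreya}) unconditionally, so the top $k$ components need no extra hypothesis. The inactive counts use $D_{n,d_j+h} = Y_{n,j}$ together with $\tfrac 1 n \vecY_n \almostsure \vecv_1$, which Theorem~\ref{Thm:strong} derives from the inverse relation~(\ref{Eq:Yn}); that step is valid precisely when $\lambda_1 = \tau_0-2 \neq 0$, i.e.\ $\tau_0 > 2$. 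The nondegeneracy condition $n_i>1$ forces $\tau_0 \ge n_i \ge 2$, with equality only when $k=1$ and $n_1=2$, the two-node path. I would therefore dispatch this single boundary case by hand, exactly as in the Example following Theorem~\ref{Thm:strong}: there $\matA=[0]$, $D_{n,d_1}/n \almostsure 0$, and $D_{n,d_1+h}=nY_{n,1}$ with $D_{n,d_1+h}/n \to 1$, which matches the displayed formula evaluated at $\tau_0=2$, $k=1$, $n_1=2$ (active entry $0$, inactive entry $1$). With this lone case absorbed, the stated limit holds for every nondegenerate unary network.
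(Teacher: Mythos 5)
Your proof is correct and follows the paper's intended route exactly: the paper offers no separate argument, presenting the corollary as the $m=1$ specialization of Theorem~\ref{Thm:strong}, and your evaluation of the two surviving blocks of $\vecD^*$ (active block $(\tau_0-2)\vecy_1$, inactive block $\vecy_1$) reproduces the displayed limit. Your further observation that the corollary's nondegeneracy hypothesis does not imply the theorem's invertibility hypothesis---together with your separate disposal of the single exceptional case $\tau_0=2$, the two-node path where $\lambda_1=0$ and $(\matA^T)^{-1}$ fails to exist---is a point of care the paper glosses over, and it closes a genuine small gap rather than introducing one.
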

Note the degenerate case $n_i=1$. In this case, the $i$th component of
the limiting vector is 0. So, $G_0$ has one node of degree $d_i$ (the degree of the hook in the seed),  
and sooner or later it will progress by fusing to become an inactive node of degree~$2d_i$.   
\subsection{A concrete nondegenerate unary case}
Let the seed be the one shown in Figure~\ref{Fig:admit}.
The figure shows the extended seed, with virtual nodes at the
insertion positions colored white (color 1) and blue (color 2).
\begin{figure}[thb]
\begin{center}
\begin{tikzpicture}[scale=0.7]
\node[draw=white] at (-2, -2.55) {hook};
\coordinate (A) at (-2,-2);
\coordinate (B) at (-2,2);
\coordinate (C) at (2,2);
\coordinate (D) at (2,-2);
\draw [thick] (A)--(B)--(C)--(D)--cycle;
\draw [thick] (B)--(D);
\draw [thick] (A)--(C);

\draw [thick, fill=black] (-2,-2) circle [radius=0.2];
\draw [thick, fill=black] (-2,2) circle [radius=0.2];
\draw [thick, fill=black] (2,2) circle [radius=0.2];
\draw [thick, fill=black] (2,-2) circle [radius=0.2];
\draw [thick] (3,2) ellipse (1 and 0.3);
\draw [thick] (2,3) ellipse (0.3 and 1);

\coordinate (E) at (-4,2);
\coordinate (F) at (4,-2);
\coordinate (G) at (-4,-2);
\coordinate (H) at (4.3,4.3);
\draw (B)--(E);
\draw (D)--(F);
\draw (A)--(G);
\draw (C)--(H);

\node at (-4,2) [rectangle,draw, fill=white] {$1$};
\node at (4,-2) [rectangle,draw, fill=white] {$1$};
\node at (-4,-2) [rectangle,draw, fill=white] {$1$};
\node at (4.3,4.3) [rectangle,draw, fill=cyan] {$2$};
\end{tikzpicture}
\end{center}
  \caption{An extended seed for a unary network.}
  \label{Fig:admit}
\end{figure}
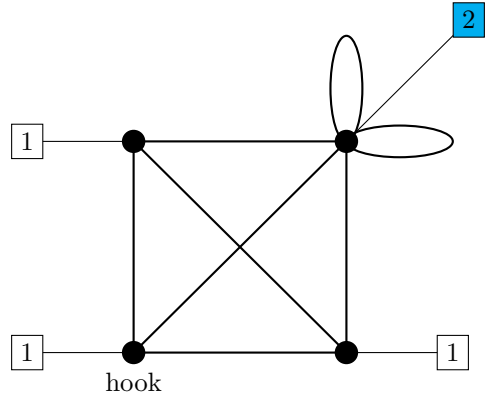

In this example, we have $k=2$, $d_1 = 3$, $d_2 =7$, 
and $\tau_0 = 4$. 
There are $n_1 =3$ nodes of degree 3 and only $n_2 =1$ node of degree 7. The
admissible degrees are $3,6,7,10$. Nodes of degree $d_3 =6$ and 
$d_4 = 10$ are inactive---they 
do not recruit after they appear. The only active nodes are
of degrees 3 and~7. We need only two colors, one (white, color 1) to correspond to
nodes of degree 3 and another (blue, color 2) to correspond to
nodes of degree~7. The replacement matrix is 
$$\matA = \begin{pmatrix} 1 &1\\
                                        2 &0\end{pmatrix}. $$
This is a nondegenerate invertible unary case, as $(\matA^T)^{-1}$ exists. By Corollary~\ref{Cor:unarynondegen}, we have
$$\frac 1 n \begin{pmatrix} 
    D_{n,3}\\
   D_{n,7}\\
D_{n,6}\\
D_{n,10}
\end{pmatrix} \almostsure
\frac 1 3
      \begin{pmatrix}
    4\\
    2\\
    2\\
    1
\end{pmatrix}. $$

In this instance, the replacement matrix is small and amenable to straightforward
variance calculation. In fact, this type of urn is known in the literature as 
a Bagchi--Pal urn, first analyzed in~\cite{Bagchi}.
That paper gives a simple formula for the variance of
the number of white balls.~According to~\cite{Bagchi}, $\Var[D_{n,3}]  =
\Var[X_{n,1}] =1/9$.
We add two balls after each drawing, and we have
$$X_{n,1} + X_{n,2} = D_{n,3} + D_{n,7} = 2n + 4.$$
The linear dependence tells us that $\Var[D_{n,3}] = \Var[D_{n,7}]$ and $\Cov[D_{n,3}, D_{n,7}] = -\Var[X_{n,1}] = -1/9$.  Further, 
by~(\ref{Eq:XnYn}), we have 
\begin{align*}
D_{n,3}  &= Y_{n,1} + 2 Y_{n,2} + 3 = D_{n,6}  + 2 D_{n,10} + 3,\\
D_{n,7}  &= Y_{n,1} + 1 = D_{n,6}   + 1.
\end{align*}
With these relations, we complete the covariance matrix for the central limit theorem:
$$ \frac 1 {{\sqrt n}}
\left (\begin{pmatrix}
         D_{n,3}\\ 
         D_{n,7}\\
 D_{n,6}\\
 D_{n,10} \end{pmatrix} 
     -\frac 1 3 \begin{pmatrix}
         4\\ 
         2\\
         2\\
         1 \end{pmatrix}
\right)    \, \convD\, \normal \left(\begin{pmatrix}
         0\\ 
         0\\
         0\\
         0 \end{pmatrix}, \frac 1 9\begin{pmatrix}
         1&-1&-1&1\\ 
         -1&1&1&-1\\
         -1&1&1&-1\\
         1,&-1&-1&1 \end{pmatrix}\right).$$
\subsection{A case with degeneracy}
\label{Subsec:unarydegen}
Consider the extended seed  shown in Figure~\ref{Fig:degen}, from 
which we build a unary network.
In this example, we have $k=3$ distinct degrees in the seed, 
which are
$d_1 = 1,  d_2 =2$, and $d_3 = 3$. 
There are $n_1 = 1$ node of degree 1, $n_2 =2$ nodes of degree 2, and
$n_3 =1$ node of degree 3. We need three colors for the active nodes, with a ball
of color $j$ corresponding to an active node of degree~$j$, for $j=1,2,3$.

The replacement matrix is 
$$\matA = \begin{pmatrix} -1&2&1\\
                                         0&1&1\\
                                         0&2&0\end{pmatrix}. $$

\bigskip
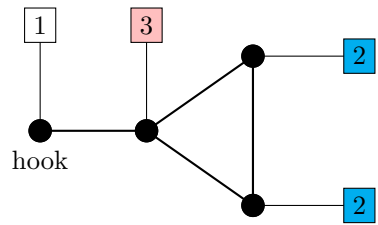
\begin{figure}[thb]
\begin{center}
\begin{tikzpicture}[scale=0.7]
\node[draw=white] at (-4, -0.55) {hook};
\coordinate (A) at (-4,0);
\coordinate (B) at (-2,0);
\coordinate (C) at (0,1.414);
\coordinate (D) at (0,-1.414);
\draw [thick] (B)--(C)--(D)--cycle;
\draw [thick] (A)--(B);

\draw [thick, fill=black] (-4,0) circle [radius=0.2];
\draw [thick, fill=black] (-2,0) circle [radius=0.2];
\draw [thick, fill=black] (0,1.414) circle [radius=0.2];
\draw [thick, fill=black] (0,-1.414) circle [radius=0.2];

\draw [thick, fill=black] (-4,0) circle [radius=0.2];
\draw [thick, fill=black] (-2,0) circle [radius=0.2];
\draw [thick, fill=black] (0,1.414) circle [radius=0.2];
\draw [thick, fill=black] (0,-1.414) circle [radius=0.2];

\coordinate (E) at (-4,2);
\coordinate (F) at (-2,2);
\coordinate (G) at (2,1.414);
\coordinate (H) at (2,-1.414);
\draw (A)--(E);
\draw (B)--(F);
\draw (C)--(G);
\draw (D)--(H);

\node at (-4,2) [rectangle,draw, fill=white] {$1$};
\node at (-2,2) [rectangle,draw, fill=pink] {$3$};
\node at (2,1.414) [rectangle,draw, fill=cyan] {$2$};
\node at (2,-1.414) [rectangle,draw, fill=cyan] {$2$};
\end{tikzpicture}
\end{center}
  \caption{A seed leading to a degenerate unary network.}
  \label{Fig:degen}
\end{figure}

By~(\ref{Eq:Athreya}), we obtain
$$\frac 1 n \begin{pmatrix} X_{n,1} \\
    X_{n,2}\\
   X_{n,3}
\end{pmatrix}  \almostsure  \frac 1 3\begin{pmatrix} 0 \\
        4 \\
        2
\end{pmatrix}. $$
Let us distinguish nodes of degree $2$ as active, with the count $\hat D_{n,2}$,
and inactive with count $\tilde D_{n,2}$. Likewise, we distinguish 
nodes of degree $3$ as active, with the count $\hat D_{n,3}$,
and inactive with count $\tilde D_{n,3}$. All nodes of degree 4 are
inactive. The actual number of nodes of degree 2 is $D_{n,2} =
  \hat D_{n,2} + \tilde D_{n,2}$, and the actual number of degree 3 is
$D_{n,3} =
  \hat D_{n,3} + \tilde D_{n,3}$.

The network in this example has $\tau_0 =4$. By Corollary~\ref{Cor:unarynondegen}, we have the strong convergence
$$\frac 1 n \begin{pmatrix} D_{n,1} \\
    \hat D_{n,2}\\
   \hat D_{n,3}\\
\tilde D_{n,2}\\
\tilde D_{n,3}\\
 D_{n,4}
\end{pmatrix}  \almostsure  \frac 1 3\begin{pmatrix} 0 \\
        4 \\
        2\\
        0\\
        2\\
        1
\end{pmatrix}, $$
from which we find the limiting vector of degrees: 
$$\frac 1 n \begin{pmatrix} D_{n,1} \\
    D_{n,2}\\
   D_{n,3}\\
 D_{n,4}
\end{pmatrix}  \almostsure  \frac 1 3\begin{pmatrix} 0 \\
        4 \\
        4\\
        1
\end{pmatrix}. $$  

As one of the components of $\vecv_1$ is 0,  
the central limit theorem in Smythe~\cite{Smythe} does
not apply. However, by an alternative formulation in Janson
(Theorem 3.22 in~\cite{Janson}) we still
get a central limit theorem.

For the inactive nodes (of degrees 2, 3 and 4), we use the construction
in~(\ref{Eq:Yn}). Specialized to the unary network at hand, 
the limiting covariance of the vector on the left of~(\ref{Eq:Yn}) follows:
\begin{align*}
\frac 1 n \Cov\left[\begin{pmatrix}
Y_{n,1} \\
Y_{n,2} \\
Y_{n,3} 
\end {pmatrix}\right] &\to  (\matA^T)^{-1} \lim_{n\to\infty} \Big(\frac 1 n\Cov[\vecX_n]\Big) \big( (\matA^T)^{-1}\big)^T\\
&= \frac 1 9 \begin{pmatrix}
              0& 0&0\\
              0& 1& -1\\
              0& -1& 1\\
\end {pmatrix}.
\end{align*}
        
For notational convenience, we refer to the elements of the matrix 
on the right as $y_{r,s}$, for $r,s\in\{1,2,3\}$. 

The three components of the vector on the left-hand side of~(\ref{Eq:Yn}) are 
$Y_{n,1}= \tilde D_{n, 2}$,
$Y_{n,2}=\tilde D_{n,3}$
and $Y_{n,3}=\tilde D_{n,4}$. So, the last matrix is  the bottom right $3\times 3$
block in the full $6 \times 6$ limiting covariance matrix among all the nodes 
(active and inactive). Let us call that latter $6 \times 6$ matrix  $\bf G$,
and refer to its elements by $g_{r,s}$, for $r,s \in [6]$.

As we can only have at most one inactive node of degree 1,
and at most one inactive node of degree 2, we have $\frac 1 n D_{n,1}
\almostsure 0$, and  $\frac 1 n \tilde D_{n,2}
\almostsure 0$. So, 
$$\frac 1 n \Var[D_{n,1}] \to 0, \qquad \frac 1 n \Var[\tilde D_{n,2}] \to 0.$$
Also any limiting covariances involving these degrees are 0.
Reading off~(\ref{Eq:XnYn}) component-wise, we  get three separate equations:
\begin{align}
X_{n,1} &= -Y_{n,1} +1, \label{Eq:Yn1} \\
X_{n,2} &= 2 Y_{n,1} + Y_{n,2} + 2Y_{n,3} + 2, \label{Eq:Yn2} \\
X_{n,3} &= Y_{n,1} +Y_{n,2}  +1 \label{Eq:Yn3}.
\end{align}
From these three equations, we can get the top left $3\times 3$ block of
the full $6\times 6$ limiting covariance.
For instance, by taking the variance of~(\ref{Eq:Yn2}), and scaling
by $n^{-1}$, we get
$$\frac 1 n \Var[X_{n,2}] \to 4 y_{1,1} + y_{2,2} + 4y_{3,3}
                       + 4y_{1,2} + 8y_{1,3} + 4y_{2,3} = \frac 1 9;$$
recall that $y_{1,1} = 0$,
$y_{1,2} = 0$,
 and $y_{1,3} = 0$.

Adding any two of the equations~(\ref{Eq:Yn1})--(\ref{Eq:Yn3}),
gives us one covariance in the top left $3\times 3$ block of
the full $6\times 6$ matrix $\bf G$.

Reorganize~(\ref{Eq:Yn2}) in the form
$$X_{n,2} - Y_{n,2} = 2Y_{n,1} + 2Y_{n,3}  +2,$$
and take the variance, to get
\begin{align*}
&\Var[X_{n,2}] +  \Var[Y_{n,2}] - 2\, \Cov[X_{n,2}, Y_{n,2}] 
   \\ 
& \qquad\qquad = 4\, \Var[Y_{n,1}] + 4\, \Var[Y_{n,3}] 
 + 8 \, \Cov[Y_{n,1}, Y_{n,3}].
\end{align*}
Taking the limit at a scale of $n^{-1}$, we get
$$ \frac 1 n \Cov[\hat D_{n,2}, \tilde D_{n,3}] 
  \to g_{2,5} = - \frac 1 2 (4y_{1,1}+ 4y_{3,3}   + 8 y_{1,3} - y_{2,2} - g_{2,2}) 
    = - \frac 1 9;$$
recall that $y_{1,1} = 0$ and $y_{1,3} = 0$.

Reorganize~(\ref{Eq:Yn2}) in a form separating $2 Y_{n,3}$ on the left,
a similar computation gives $\Cov[\hat D_{n,2},  D_{n,4}]$.
Taking the limit at a scale of $n^{-1}$, we get
$$ \frac 1 n\Cov[\hat D_{n,2}, D_{n,4}]  
  \to g_{2,5}  = -\frac 1 4 (4y_{1,1}+ y_{2,2}   + 4 y_{1,2}  - 4 y_{3,3} - g_{2,2})=  \frac 1 9;$$  
recall that $y_{1,1} = 0$ and $y_{1,2} = 0$.

The trickiest elements of $\bf G$ are $g_{3,5} = g_{5,3}$
and $g_{3,6} = g_{6,3}$. No individual equation 
among~(\ref{Eq:Yn1})--(\ref{Eq:Yn3}) has sufficient information to 
determine these elements, but a combination 
of~(\ref{Eq:Yn2}) and (\ref{Eq:Yn3}) gives us what we want.
Add these two equations, with $2Y_{n,2}$ written on  
the left of~(\ref{Eq:Yn2}),
scale by $n^{-1}$ and take the limits, to get
$$g_{2,2}  + g_{3,3} + 2 g_{2,3}+  4y_{2,2}
    - 4g_{2,6}  - \lim_{n\to\infty} \frac 4 n \Var[\hat D_{n,2}, \tilde D_{n,3}] 
       = 9y_{1,1}+4 y_{3,3}+ 12 y_{1,3};$$ 
recall that $y_{1,1} = 0$ and $y_{1,3} = 0$.
The remaining limit is $g_{3,5} = -\frac 1 9$.
A similar manipulation, with $2Y_{n,3}$ on the left, gives us 
$g_{3,6} = \frac 1 9$. The complete covariance matrix is

$${\bf G} = \lim_{n\to\infty} \frac 1 n \Cov
\left [\begin{pmatrix}
         D_{n,1}\\ 
         \hat D_{n,2}\\
         \hat D_{n,3}\\
   \tilde D_{n,2}\\
         \tilde D_{n,3}\\
 D_{n,4} 
\end{pmatrix} \right]
    = \frac 1 9 
 \begin{pmatrix}
         0&0&0&0&0&0\\ 
         0&1&-1&0&-1&1\\
         0&-1&1&0&-1&1\\
         0&0&0&0&0&0\\
         0&-1&-1&0&1&-1\\
         0&1&1&0&-1&1\\
 \end{pmatrix}.$$

The graph theory point of view does not distinguish
node degrees by history. A network analyst may wish
to make decisions based on the plain degrees. For
instance, for the unary network grown from the seed
in Figure~\ref{Fig:degen},
in a social context all nodes of degree 3 may mean three friends.   
The analyst is only interested in the limiting value of $\frac 1 n\Cov[\vecD_n]$.
The entries of this matrix are related to~$\bf G$. For example
$D_{n,2} =   \hat D_{n,2} + \tilde D_{n,2}$, giving us
the limiting covariance relation
$$ \lim_{n\to\infty} \frac 1 n \Var[D_{n,2}] =   g_{2,2} + g_{4,4} + g_{2,4} = \frac 1 9 .$$
Completing the computations for all the entries of $\lim_{n\to\infty} \frac 1 n\Cov[\vecD_n]$,
we get the central limit theorem
$$ \lim_{n\to\infty} \frac 1 {{\sqrt n}}
\left (\begin{pmatrix}
         D_{n,1}\\ 
         D_{n,2}\\
         D_{n,3}\\
         D_{n,4} 
\end{pmatrix} 
     -\frac 1 3 \begin{pmatrix}
         0\\ 
         4\\
         4\\
         1 \end{pmatrix}
\right)    \, \convD\, \normal \left(\begin{pmatrix}
         0\\ 
         0\\
         0\\
         0 \end{pmatrix}, \frac 1 9 \begin{pmatrix}
         0&0&0&0\\ 
         0&1&-2&1\\
         0&-2&0&0\\
         0&1&0&1 \end{pmatrix}\right).$$
\begin{remark}
Curiously, $\frac 1 n \Var[D_{n,3}] \to 0$. A moment of reflection 
leads us to see that active and inactive nodes of degree 3 appear together
in every step, once the initial hook becomes inactive and the node
of degree 3 in the seed
copy latched into it also recruits. That is, after the latter event, $D_{n,3}
=\hat D_{n,3} + \tilde D_{n,3}$ behaves deterministically.
\end{remark}
\subsection{A binary case}
\label{Subsec:binary}
Consider the extended seed shown in Figure~\ref{Fig:virtual}. 
It is the same seed we considered in the first example on unary
networks but now we build a random binary network out of it. 
In this binary example, we have $k=2$ distinct degrees in the seed, 
which are
$d_1 = 3$, and $d_2 =7$. 
There are $n_1 = 3$ nodes of degree 3 and $n_2 =1$ 
node of degree 7. We need four colors for the active nodes, with 
color $1,2,3,4$ respectively corresponding to the active nodes of degrees $3,7,6,10$.

The replacement matrix is 
$$\matA = \begin{pmatrix}  2&2&1&0\\
                                         4&0&0&1\\
                                         4&2&-1&0\\
                                         4&2&0&-1\end{pmatrix}. $$
According to Theorem~\ref{Thm:strong}, we get
$$ \frac 1 n  \vecD_n^* = \frac 1 n \begin{pmatrix} D_{n,3} \\
   D_{n,7}\\
D_{n,6}\\
D_{n,10}\\
D_{n,9}\\
D_{n,13}
\end{pmatrix}  \almostsure  \frac 1 {21}\begin{pmatrix} 
        60 \\
        30 \\
10\\
5\\
2\\
1
\end{pmatrix} =:\vecD^* . $$

The limiting covariance matrix is computed in the appendix.
The corresponding central limit theorem is
{\small\begin{align*}
& \frac 1 {{\sqrt n}} \left(\begin{pmatrix} D_{n,3} \\
   D_{n,7}\\
D_{n,6}\\
D_{n,10}\\
D_{n,9}\\
D_{n,13}
\end{pmatrix}  - \frac 1 {21}\begin{pmatrix} 
        60 \\
        30 \\
10\\
5\\
2\\
1
\end{pmatrix}\right) 
\quad  \convD \
\normal  \left(\begin{pmatrix}
         0\\ 
0\\ 
0\\ 
0\\ 
0\\ 
0         
 \end{pmatrix}, \frac 5 {882} \begin{pmatrix}
24& -16&-27&11& 3&5\\ 
-16&20& 11&-19& 5& -1\\ 
-27&11& 40& -8&-13&-3\\ 
11&-19&-8&24& -3&-5\\
3& 5& -13& -3& 10& -2\\
5&-1&-3&-5&-2& 6
 \end{pmatrix}\right).
 \end{align*}}
\section{Concluding remarks}
We studied random $m$-ary networks that grow from a seed, and each
node in the network has $m$ hooking positions. The degrees
in the network evolve over time. Via a connection to the composition
of certain \Polya\ urns, we are able to extract theorems for the degrees
in the $m$-ary network. Namely a strong law (Theorem~\ref{Thm:strong}) and a multivariate central
limit theorem (Theorem~\ref{Thm:CLT}) were developed for $m$-ary networks.
\section*{Appendix}
We illustrate the procedure 
in~(\ref{Eq:Mahmoud21}) on the binary network
in Section~\ref{Subsec:binary}. The limit $\matQ_c$ is  $4 \times 4$, with 
the elements $ q_{ij}$, for $i,j\in [4]$. 

The urn associated with this network has $\lambda_1 = \theta = 5$, and the principal eigenvector is $(x_1, x_2, x_3, x_4)^T = \frac 1 {21} (12,6,2,1)^T$.  
We wish to solve the matricial 
equation
$$ \matA^T \matQ_4+ \matQ_4 \matA + \lambda_1 \matA^T 
         \Diag(x_1, x_2, x_3, x_4)\matA
             - \lambda_1\matA^T \vecv_1 \vecv_1^T \matA 
             - \lambda_1\matQ_4  =
\bfzero_4.$$
We extract elements from the left-hand side and equate them to 0. It
is enough to extract the 10 elements on and above the diagonal, since
$\matQ_4 = [q_{i,j}]_{1\le i,j\le 4}$ 
is symmetric. Going through this extraction, we get
\begin{align*}
-q_{1,1}+8 q_{1,2}+8 q_{1,3}+8 q_{1,4}+\frac {240}{49} &= 0,\\
2 q_{1,1}+2 q_{1,3}+2 q_{1,4}-3q_{1,2}+4q_{2,2}+4q_{2,3} + 4q_{2,4}
            -\frac {160}{49} &=0,\\
 q_{1,1} - 4 q_{1,3} + 4 q_{2,3} +4 q_{3,3}
            + 4 q_{3,4} - \frac {440}{147}&=0,\\
  q_{1,2} - 4 q_{1,4}+4 q_{2,4}+4 q_{3,4}+ 4 q_{4,4}
          + \frac{200}{147}&=0,\\
4q_{1,2} + 4q_{2,3}+4q_{2,4} - 5 q_{2,2} 
           +\frac {200}{49}&=0,\\
q_{1,2} -6 q_{2,3}+2q_{1,3}+ 2 q_{3,3} + 2  q_{3,4}
         + \frac {200}{147}&=0,\\
q_{2,2} - 6 q_{2,4} + 2 q_{1,4} + 2 q_{3,4} + 2 q_{4,4}
               - \frac {320} {147}=0,\\
2q_{1,3} - 7 q_{3,3} + \frac {970}{441} =0,\\
q_{2,3} - 7 q_{3,4} + q_{1,4} -\frac {250} {441} = 0,\\
2q_{2,4} - 7 q_{4,4} + \frac {610} {441} = 0.
\end{align*}
This is a standard system of linear equations, with the 
solution 
$$\lim_{n\to\infty} \frac 1 n \Cov\left [\begin{pmatrix}
 X_{n,1}\\ 
 X_{n,2}\\ 
X_{n,3}\\ 
X_{n,4}        
\end{pmatrix} \right]= \frac 5 {441} \begin{pmatrix}
 48&-32&-27&11\\ 
 -32&40&11&-19\\ 
 -27&11&20&-4\\ 
 11&-19&-4&12\\      
 \end{pmatrix}.$$ 

We remind the reader that colors $1,2,3,4$ correspond to degrees $3,7,6,10$ respectively and that 
$Y_{n,3}$ and $Y_{n,4}$ correspond to $D_{n, 9}$ and $D_{n, 13}$. Additionally, with 
$$D_{n,3}  = \frac 1 2 X_{n,1} , \qquad  D_{n,7} = \frac 1 2 X_{n,2} ,\qquad
D_{n,6} = X_{n,3},  \qquad D_{n,10}= X_{n,4},$$
we have the variances and covariances in the top left $4\times 4$ block:
$$\frac 1 n\Var[D_{n,3}] =\frac 1 {4n} \Var[X_{n,1}] \to \frac{20} {147}, \qquad    
          \frac 1 n \Var[D_{n,7}]= \frac 1 {4n}  \Var[X_{n,2}]  
             \to \frac{50} {441},$$
$$\frac 1 n \Var[D_{n,6}] = \frac 1 n   \Var[X_{n,3}] \to  \frac{100} {441}, \qquad \frac 1 n\Var[D_{n,10}] = \frac 1 n  \Var[X_{n,4}] \to  \frac{20} {147},$$
$$\frac 1 n \Cov[D_{n,3}, D_{n,7}] = \frac 1 {4n} \Cov[X_{n,1}, X_{n,2}] 
         \to  - \frac{40} {441},$$
$$\frac 1 n\Cov[D_{n,3}, D_{n,6}] = \frac 1 {2n} \Cov[X_{n,1}, X_{n,3}] 
       \to  - \frac{15} {98},$$
$$\frac 1 n\Cov[D_{n,3}, D_{n,10}] = \frac 1 {2n} \Cov[X_{n,1}, X_{n,4}] \to                       
               \frac{55} {882},$$
$$\frac 1 n\Cov[D_{n,7}, D_{n,6}] = \frac 1 {2n} \Cov[X_{n,2}, X_{n,3}] \to  \frac{55} {882},$$
$$\frac 1 n\Cov[D_{n,7}, D_{n,10}] = \frac 1 {2n} \Cov[X_{n,2}, X_{n,4}] \to  -\frac{95} {882},$$
$$\frac 1 n \Cov[D_{n,6}, D_{n,10}] = \frac 1 {n} \Cov[X_{n,3}, X_{n,4}] \to  -\frac{20} {441}.$$

By the symmetry of a covariance matrix, we can complete the top left
$4\times 4$ block of the full $6\times 6$ limiting covariance matrix among all the degrees. 

For the inactive nodes (of degrees 9 and 13), we use the construction
in~(\ref{Eq:Yn}). Specialized to the binary network at hand, we
have
$$\vecY_n = (\matA^T)^{-1}\left(
\begin{pmatrix}
               X_{n,1} \\
               X_{n,2} \\
               X_{n,3} \\
               X_{n,4}  
\end {pmatrix}
   - \begin {pmatrix}
                 6 \\
                 2 \\
                 0 \\
                 0
\end {pmatrix}\right).$$
The limiting covariance of the vector on the left, at the scale of $n^{-1}$,  
follows:
\begin{align*}
\frac 1 n \Cov\left[\begin{pmatrix}
Y_{n,1} \\
Y_{n,2} \\
Y_{n,3} \\
Y_{n,4}
\end {pmatrix}\right] &\to  (\matA^T)^{-1}\lim_{n\to\infty}
    \Big(\frac 1 n \Cov[\vecX_n] \Big)\big( (\matA^T)^{-1}\big)^T\\
&= \frac 5 {882}\begin{pmatrix}
              24& -16& -3& -5 \\
             -16& 20& -5& 1  \\
              -3& -5& 10& -2\\
             -5& 1& -2& 6
\end {pmatrix}.
\end{align*}

The bottom two components of the vector on the left-hand side are $Y_{n,3}=D_{n, 9}$ and $Y_{n,4}=D_{n, 13}$. So, the bottom $2\times 2$
block in the last matrix are  the bottom right $2\times 2$
block of the full $6\times 6$ limiting covariance matrix among all the degrees. 

We determine the rest of the full $6\times 6$ matrix from relation~(\ref{Eq:XnYn}),
which in this instance becomes
\begin{equation}
\begin{pmatrix}
X_{n,1}\\
X_{n,2}\\
X_{n,3}\\
X_{n,4}
 \end{pmatrix} = \begin{pmatrix} 
           2Y_{n,1} + 4Y_{n,2}
                  + 4Y_{n,3}+ 4Y_{n,4}+6 \\
                  2Y_{n,1} + 2Y_{n,3}
                  + 2Y_{n,4}+ 2\\
                 Y_{n,1} - Y_{n,3}\\
                  Y_{n,2} - Y_{n,4}
\end{pmatrix}. 
\label{Eq:appendix}
\end{equation}

 Extract the third component and write $Y_{n,3}$
 on the left-hand side.
Taking  the variance, we get
$$\Var[X_{n,3}] +   \Var[Y_{n,3}] +2\, \Cov[D_{n,6} , D_{n,9}]  = \Var[Y_{n,1}].$$
After scaling, we get
\begin{align*} 
 \frac 1 n\Cov[D_{n,6} , D_{n,9} ]  
    &=  \frac 1 {2n}\Var[Y_{n,1}]  -          \frac 1 {2n} \Var[Y_{n,3}]  - 
                       \frac 1 {2n} \Var[X_{n,3}]\\
    &\to \frac 1 2\Big(\frac {20} {147}
           -  \frac{25}  {441}-  \frac{100} {441}\Big)\\
     &= - \frac  {65}{882}.
\end{align*}

The fourth component in~(\ref{Eq:appendix}) can be handled similarly to give the limit
$\lim_{n\to \infty} \frac 1 n\Cov[D_{n,10} , D_{n,13}] = - \frac {25} {882}$. 
Another equation comes from the top component 
in~(\ref{Eq:appendix}). Upon reorganization,
we write
$$X_{n,1} -  4Y_{n,4} = 2 Y_{n,1} + 4Y_{n,2} + 4Y_{n,3} + 6.$$
Taking the variance and using $X_{n,1} = 2 D_{n,3}$,
$Y_{n,4}= D_{n,13}$, we can solve for $\lim_{n\to\infty}
\frac 1 n \Cov[D_{n,3}, D_{n,13}]= \frac {25} {882}$. 

With a different reorganization of the top~components in~(\ref{Eq:appendix}), 
bringing $4Y_{n,3} = 4D_{n,9}$ 
to the left-hand side, we obtain the limit 
$\lim_{n\to\infty} \frac 1 n \Cov[D_{n,3}, D_{n,9}]
= \frac 5 {294}$. 

Coming from the second 
equation from the top in~(\ref{Eq:appendix}), the relation
$$ X_{n,2} = 2 Y_{n,1} + 2 Y_{n,1} +2 Y_{n,1} +2$$
can be handled in a similar manner in two steps, once with $2Y_{n,3}$ on the left, and a second time with  $2Y_{n,4}$ on the left, to respectively
produce the limits 
$$\lim_{n\to\infty} \frac 1 n\Cov[D_{n,7}, D_{n,9}]  
= \frac {25}{882},
 \qquad \lim_{n\to\infty}\frac 1 n \Cov[D_{n,7}, D_{n,13}] = - \frac 5 {882}.$$

No single equation taken from~(\ref{Eq:appendix}) gives us the remaining elements. 
We combine the third and the fourth in the form
$$ X_{n,3} + X_{n,4} = Y_{n,1} + Y_{n,2} -Y_{n,3}- Y_{n,4}.$$
With $Y_{n,3}$ written on the left, we get $\lim_{n\to\infty}\frac 1 n  \Cov[D_{n,9}, D_{n,10}] = - \frac  5 {294}$. The calculation needs the limiting value $\lim_{n\to\infty} \frac 1 n  \Cov[D_{n,6}, D_{n,9}]$,
and this has already been determined.  
A reorganization with $Y_{n,4}$ written on the left, we get $\lim_{n\to\infty}\frac 1 n  \Cov[D_{n,6}, D_{n,13}] = -\frac 5
{294}$. The calculation needs the limiting value $\lim_{n\to\infty}\frac 1 n  \Cov[D_{n,10}, D_{n,13}]$,
and this has already been determined. 
\bibliographystyle{plain}

\end{document}